\newtheorem{theorem}{Theorem}
\newtheorem{definition}{Definition}
\newtheorem{lemma}{Lemma}
\newtheorem*{remark}{Remark}
\newtheorem{assumption}{Assumption}
\newcommand{\sa}[1]{{\textcolor{black}{#1}}}
\DeclareMathOperator*{\argmin}{arg\,min}
\definecolor{mygreen}{RGB}{153,255,153}
\definecolor{myorange}{RGB}{255,178,102}
\definecolor{myred}{RGB}{255,153,153}
\definecolor{myblue}{RGB}{153,204,255}
\title{Existence of Solutions for Non-monotone Variational Inequalities and Implications for Games
}
\author{
    Sina Arefizadeh$^*$\\
    Dept. of Electrical and Computer Engineering\\
    Arizona State University\\
    Tempe, Arizona, USA\\
    \texttt{sarefiza@asu.edu}
\And
    Angelia~Nedi\'c\\
    Dept. of Electrical and Computer Engineering\\
    Arizona State University\\
    Tempe, Arizona, USA\\
    \texttt{Angelia.Nedich@asu.edu}
}
\theoremstyle{definition}
\newtheorem{example}{Example}
\def\argmin{\mathop {\rm argmin}}
\begin{document}
\thanks{Corresponding author}
\thanks{ This work has been supported in parts by the NSF awards CCF-2106336 and CIF-2134256.}
\thanks{This paper has been published in the Journal of Optimization Theory and Applications. Please cite the published version: Arefizadeh, S., Nedić, A. Existence of Solutions for Non-monotone Variational Inequalities and Implications for Games. J Optim Theory Appl 210, 23 (2026). https://doi.org/10.1007/s10957-026-03055-6
}
\maketitle
\begin{abstract}
In this paper, we study the existence of solutions in non-monotone variational inequalities (VIs) through the normal mapping properties. In particular, {\color{black} we provide sufficient conditions for the existence of solutions assuming that the normal mapping associated with the VI is norm coercive and its generalized Jacobian has certain properties, such as a full rank at points where the normal mapping is not zero. Then, we investigate sufficient conditions for the VI mapping and its Jacobian that ensure the generalized Jacobian of the normal mapping has full rank}, such as
the uniform P-function and the uniform P-matrix condition.
Subsequently, {\color{black} we focus on VIs arising from games and interpret our results in a game setting and provide} a sufficient condition for a game to have a Nash equilibrium.
Through examples, we show that our sufficient conditions can be used to assert the existence of a solution to a VI, or a quasi-Nash in a game, while the existing results relying on the uniform P-function property or the P$_\Upsilon$-matrix condition cannot be employed. 
\end{abstract}

\section{Introduction}
In this paper, we study variational inequalities (VIs) with the goal of establishing new sufficient conditions that guarantee the existence of solutions for non-monotone VIs, as well as the existence of equilibria in the special case when the VI is induced by a competitive game among a set of players. 
The conditions ensuring the existence of solutions have been studied for both monotone and non-monotone VIs. In particular, certain conditions on the mapping,
$F$ such as continuity, pseudo-monotonicity, or structural properties of its Jacobian, have been employed to assert the existence of solutions \cite{facchinei2003finite,parise2019variational,ravat2011characterization}.

In the setting of games,
a solution concept in classical game theory is the Nash equilibrium. \sa{The existence of a Nash equilibrium can be guaranteed in games with convex cost functions and compact convex action sets. This guarantee may fail in the absence of convexity~\cite{pang2011nonconvex}.}
The existence of a Nash equilibrium has been extensively analyzed through the variational inequality framework, as discussed in \cite{naghizadeh2017uniqueness,ebrahimi2025united,melo2018variational,parise2019variational}, which rely on the strict monotonicity of the game mapping. Recent works investigating the algorithms to obtain the Nash equilibrium, such as \cite{li2023stackelberg,carnevale2024tracking}, focus on games with strongly monotone mappings, while others (e.g., \cite{arefizadeh2022distributed,qu2025generalized,ran2024distributed}) study games with compact convex action sets. 
Papers~\cite{gohary2009generalized,hobbs2007nash,luo1996mathematical,scutari2010convex} argue that the assumptions such as bounded action sets for players or monotone mappings are often unrealistic. Consequently, when games lack convexity of the players' cost functions, the Nash equilibrium no longer serves as a suitable solution concept.

In games where players exhibit bounded rationality rather than complete rationality, the players may lack the ability to perform an exhaustive search of their action sets, which is required to act in accordance with the maximum utility theorem \cite{aleskerov2007utility}. Under the bounded rationality assumption, players instead tend to settle for locally superior alternatives, rather than globally optimal ones. Such alternatives are often chosen based on incremental improvements or inclinations toward relatively better options \cite{simon1955behavioral}. In such scenarios, papers~\cite{pang2011nonconvex,pang2013joint} suggest studying a local Nash equilibrium and introduce the concept of {a quasi-Nash equilibrium, as a more general solution concept in games with non-convex players' cost functions. 
While a quasi-Nash equilibrium arises in non-convex games, higher-order optimality conditions are typically required to distinguish between a quasi-Nash equilibrium and a local Nash equilibrium~\cite{pang2011nonconvex}.
The study~\cite{ratliff2013characterization} characterizes the local Nash equilibrium in games where each player cost function is locally convex with respect to the player's own decision variable. 

This paper builds on some insights from our prior work reported in a conference paper \cite{NonMonotoneVI} and its extended version on arxiv
\cite{2510.02724}, where sufficient conditions for the existence of solutions to an unconstrained VI$(\mathbb{R}^m,F)$ was investigated through the properties of the Jacobian $\nabla F(\cdot)$ and the natural mapping $F^{\rm nat}_K(\cdot)$.  
Unlike our prior work in \cite{NonMonotoneVI}, 
\cite{2510.02724}, in this paper, we explore sufficient conditions for the existence of solutions to generic non-monotone VI$(K,F)$ problem and study the implications for the VIs arising from games. Aside from going beyond
the unconstrained VIs, we relax the P-function 
property of the VI mapping and the related P-matrix condition, which have often been used to ensure the existence of solutions for non-monotone VIs~\cite{facchinei2003finite,parise2019variational}.

{\color{black}
In this paper, we provide sufficient conditions for the existence of solutions to a non-monotone VI$(K,F)$. In the development, we 
explore the implications of several P-properties for the singular values of the Jacobian $\nabla F(\cdot)$, which are used to
obtain sufficient conditions for the existence of solutions to  VI$(K,F)$. These conditions involve the norm coercivity property of the normal mapping and some assumptions on the generalized Jacobian of the normal mapping. Then, we explore the implications of several P-properties for the singular values of the Jacobian $\nabla F(\cdot)$.}
 Applying results to VIs arising from games, we obtain new results for the existence of a Nash equilibrium that extend those developed in~\cite{parise2019variational}. 
Specifically,
the novel results of this paper include:
\begin{enumerate}
\item In Section~\ref{Sec-VI-Problem Statement}, 
for a mapping $x\mapsto A(x)\in\mathbb{R}^{m\times m}$ that satisfies the uniform P-matrix condition on a set $K$, we show that all the singular values of all principal sub-matrices of $A(x)$ are uniformly bounded away from zero on the set $K$ (Theorem~\ref{Thm-Singval-Pmat}). We also establish that, when a mapping $F:K\to\mathbb{R}^m$ is continuously differentiable and has a uniform P-function property, then all principal sub-matrices of the Jacobian $\nabla F(x)$ have singular values bounded away from zero uniformly over $K$, for a Cartesian and non-Cartesian structure of the set $K$ (Theorem~\ref{thm-P-function}).

\item In Section~\ref{Sec-Main result-VI}, {\color{black} we establish our main results on the existence of a solution to VI$(K,F)$ assuming the norm coercivity of the normal mapping $F_K^{\rm nor}(\cdot)$ and some properties of its generalized Jacobian (Theorem~\ref{thm-Normal-Mapping_Sol_Existence0} and Theorem~\ref{thm-Normal-Mapping_Sol_Existence}). Subsequently, we study properties of the mapping $F(\cdot)$ and the set $K$ ensuring that the sufficient conditions are satisfied} (Lemma~\ref{Lem-coercivity of F-K-Nor}, Theorem~\ref{thm-general jac of Const}). We also provide an example showing that our result can be used to assert the existence of a solution to a VI, while the existing result relying on the uniform P-function property cannot be used.
\item In Section~\ref{Sec-Game}, we consider a VI$(K,F)$ arising from a game and investigate the implications of the uniform P-matrix condition for principal minors of the Jacobian $\nabla F(\cdot)$ and norm coercivity of $F(\cdot)$ (Theorem~\ref{thm:Pmatrix_strongly_convex}, Theorem~\ref{thm:Pmatrix_norm_coercive}). 
We also investigate the implications of the $\rm P_{\Upsilon}$-matrix condition for the game Jacobian (Theorem~\ref{thm-P_Upsilon-uniquesol}). We show that these conditions imply that the conditions of our main existence result are satisfied.
A particularly interesting finding is that: the uniform P-matrix and $\rm P_{\Upsilon}$-matrix conditions imply that each player's cost function is strongly convex in the player's own decision variable (Theorem~\ref{thm:Pmatrix_strongly_convex}(a), Theorem~\ref{thm-P_Upsilon-uniquesol}(a)); hence, under such conditions every solution to the VI formulation of the game is in fact a Nash equilibrium for the game. 
Moreover, if a game has a quasi-Nash equilibrium $\hat x$ and the players' cost functions satisfy a Polyak–\L{}ojasiewicz (P\L{})-type condition, then $\hat x$ is a Nash equilibrium (Theorem~\ref{Thm-Our+PL}).
We also give an example showing that our main result can be used to assert the existence of a Nash equilibrium in a game, while 
the results relying on the uniform P-matrix or the $\rm P_{\Upsilon}$-matrix condition are not applicable.
\end{enumerate}

The remainder of the paper is organized as follows: 
Section~\ref{Sec-VI-Problem Statement} states the VI problem, and introduces some concepts and results related to norm coercivity of the mapping and non-singularity of its Jacobian, which we use in the subsequent development. 
{\color{black} Section~\ref{Sec-Main result-VI} provides sufficient conditions for the existence of solutions to
VI$(K,F)$, and explores the structure of the mapping $F(\cdot)$ and the set 
$K$ for the conditions to hold.}
 Section~\ref{Sec-Game} considers the implications of the main result of Section~\ref{Sec-Main result-VI} for VIs arising from games. It also establishes a new result for the existence of a Nash equilibrium.
Section~\ref{Sec-Conclusion} concludes the paper.

\section{Preliminaries} \label{Sec-VI-Problem Statement}
In this section, we introduce the VI problem~\cite{facchinei2003finite,konov,KSbook}, and provide some basic notions and relation that we will use later on to establish sufficient conditions for the existence of solutions to VIs and games.

\begin{definition}
Given a set $K\subseteq\mathbb{R}^m$ and a mapping $F:K\to\mathbb{R}^m$, the variational inequality problem, denoted by
VI$(K,F)$, consists of determining
   a point $x^*\in K$ such that 
   $$\langle F(x^*),x-x^*\rangle\geq 0\qquad\hbox{for all $x\in K$}.$$ 
   \end{definition}
A point $x^*$ satisfying the preceding inequality is referred to as a solution to the variational inequality problem VI$(K,F)$. The set of all such solutions is denoted by SOL$(K,F)$. 

A VI$(K,F)$ is said to be monotone if the mapping $F(\cdot)$ is monotone on the set $K$, i.e.,
\[\langle F(x)-F(y),x-y\rangle\ge0\qquad\hbox{for all }x,y\in K.\]
A VI$(K,F)$ is said to be non-monotone, when $F(\cdot)$ fails to be monotone on $K$. Our interest is in non-monotone VIs, where the mapping $F(\cdot)$  is differentiable.

We will say that the mapping $F:K\to\mathbb{R}^{m}$ is norm coercive on the set $K$ when the following relation is satisfied
\begin{equation*}
\lim_{\|x\|_2\to\infty,x\in K} \|F(x)\|_2=+\infty.\end{equation*}
If $K=\mathbb{R}^m$ in the preceding relation, we say that $F(\cdot)$ is norm coercive.

 Next, we provide the existence of a solution result to an unconstrained VI$(\mathbb{R}^m,F)$. We let $\nabla F(\cdot)$ and $\det\left(\nabla F(\cdot)\right)$ denote, respectively, the Jacobian of the mapping $F(\cdot)$ and its determinant.
 \begin{theorem}[Theorem 6 \cite{NonMonotoneVI}] \label{Thm-main theorem-prime}
    Let mapping $F:\mathbb{R}^{m}\to\mathbb{R}^{m}$ be differentiable and norm coercive, and assume that 
    $\det\left(\nabla F(x)\right)\neq 0$
    for every $x\in \mathbb{R}^{m}$ where $F(x)\neq 0$. Then,  the {\rm VI}$(\mathbb{R}^{m},F)$ has a solution.
\end{theorem}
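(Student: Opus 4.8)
The plan is to reduce the variational inequality to a zero-finding problem and then argue topologically. Since $K=\mathbb{R}^m$, a point $x^*$ solves VI$(\mathbb{R}^m,F)$ if and only if $\langle F(x^*),v\rangle\ge 0$ for every $v\in\mathbb{R}^m$; applying this to $v$ and to $-v$ forces $F(x^*)=0$, so it suffices to prove that the coercive map $F$ has a zero. The first thing I would record is that norm coercivity makes $F$ a \emph{proper} map: if $C\subseteq\mathbb{R}^m$ is compact, then $F^{-1}(C)$ is bounded (otherwise a sequence $x_k\in F^{-1}(C)$ with $\|x_k\|_2\to\infty$ would have $\|F(x_k)\|_2\to\infty$ while staying in the bounded set $C$) and closed, hence compact. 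The second observation is that the Jacobian hypothesis is exactly the statement that every critical point of $F$ lies in the zero set $F^{-1}(0)$; equivalently, the only possible critical value of $F$ is $0$, so on the open set $W:=\{x:F(x)\ne 0\}$ the map $F$ is a local diffeomorphism by the inverse function theorem.

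The heart of the argument is a clopen-image computation. Assume $F\not\equiv 0$ (otherwise every point is a zero and we are done), so that $W\ne\emptyset$, and write $V:=\mathbb{R}^m\setminus\{0\}$. I would first check that the restriction $F|_W:W\to V$ is still proper: for compact $C\subseteq V$ we have $0\notin C$, so $(F|_W)^{-1}(C)=F^{-1}(C)$, which is compact by properness of $F$. Since a proper continuous map into a locally compact Hausdorff space is closed, $F(W)$ is closed in $V$; since $F|_W$ is a local homeomorphism it is an open map, so $F(W)$ is also open in $V$. For $m\ge 2$ the punctured space $V$ is connected, so the nonempty clopen set $F(W)$ must equal all of $V$. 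Finally, $F$ is proper on all of $\mathbb{R}^m$, hence a closed map, so $\mathrm{Im}(F)$ is closed; as it contains the dense set $V=\mathbb{R}^m\setminus\{0\}$, it equals $\mathbb{R}^m$, and in particular $0\in\mathrm{Im}(F)$, which yields the desired zero of $F$.

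The case $m=1$ is not covered by the connectedness step, since $\mathbb{R}\setminus\{0\}$ is disconnected, and I would dispatch it directly: if $F$ never vanished it would have constant sign, say $F>0$, and then $F'(x)\ne 0$ everywhere would make $F$ strictly monotone, contradicting that coercivity forces $F(x)\to+\infty$ both as $x\to+\infty$ and as $x\to-\infty$. I expect the main obstacle to be the surjectivity step for $m\ge 2$, and specifically justifying that the proper local homeomorphism $F|_W$ has closed image; this is where properness, and hence norm coercivity, is genuinely used. Indeed, the map $z\mapsto e^z$ on $\mathbb{C}\cong\mathbb{R}^2$ satisfies the nonvanishing-Jacobian condition and has no zero, and it is excluded only because it fails to be coercive. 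An alternative route would phrase the same idea through the topological degree, since coercivity makes $\deg(F,B_r,0)$ well defined for large balls $B_r$ and all values in $\mathbb{R}^m\setminus\{0\}$ are regular; but the clopen-image argument above seems the most economical.
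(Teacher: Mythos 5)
Your proof is correct, but it takes a genuinely different route from the paper's. The paper itself imports this statement from \cite{NonMonotoneVI}; its own argument for results of this type is the proof of Theorem~\ref{thm-Normal-Mapping_Sol_Existence}, which by Remark~\ref{rem-generalization} specializes exactly to the present theorem when $K=\mathbb{R}^m$. That argument is variational rather than topological: set $b=\inf_{x\in\mathbb{R}^m}\|F(x)\|_2$, use norm coercivity to bound a minimizing sequence and extract a limit point $\bar x$ with $\|F(\bar x)\|_2=b$; if $b>0$ then $F(\bar x)\neq0$, so $\det\nabla F(\bar x)\neq 0$ by hypothesis, and the inverse function theorem applied at the single point $\bar x$ yields a nearby $z$ with $F(z)=(1-\alpha)F(\bar x)$, whence $\|F(z)\|_2<b$ --- a contradiction; hence $F(\bar x)=0$ and $\bar x$ solves the VI. Your clopen-image argument replaces this one-point contradiction with global topology: properness of $F$ (from coercivity), openness of $F$ on $W=\{x:F(x)\neq 0\}$ (from the Jacobian hypothesis), connectedness of $\mathbb{R}^m\setminus\{0\}$ for $m\geq 2$, and closedness of images of proper maps. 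Each approach buys something: yours proves the strictly stronger conclusion that $F$ is surjective when $m\geq 2$, and it isolates why coercivity cannot be dropped (your $e^z$ example); but it requires the proper-map machinery plus a separate one-dimensional case (Darboux's theorem and monotonicity), both of which the minimization argument avoids, since that argument needs local surjectivity at only one point and no dimension split. The paper's version is also the one that scales up to constrained VIs, where $F$ is replaced by the nonsmooth normal map $F^{\rm nor}_K$ and the inverse function theorem by Clarke's inverse mapping theorem, giving Theorem~\ref{thm-Normal-Mapping_Sol_Existence}. One caveat applies to both proofs: the statement assumes $F$ merely differentiable, whereas a ``local diffeomorphism'' (your step) and the classical inverse function theorem (the paper's step) require $C^1$ regularity; for merely differentiable maps with nonvanishing Jacobian one can still obtain openness via a degree argument --- and openness of $F|_W$, not local invertibility, is all your proof actually uses, so this repairs your step (and the paper's) without changing the structure of either argument.
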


{\color{black} Throughout the paper, when stated that the mapping $F:K\to\mathbb{R}^m$ is locally Lipschitz continuous, or continuously differentiable, on the set $K\subseteq \mathbb{R}^m$ it means that $F(\cdot)$ has these properties on an open set containing the set $K$.

In the next two sections, we consider uniform P-function and 
uniform P-matrix conditions for the mapping $F(\cdot)$ in relation to 
the norm coercivity of the mapping $F(\cdot)$ and non-singularity of the Jacobian $\nabla F(\cdot)$, respectively.
}

\subsection{Uniform P-function}\label{subs-unip-fun}

{\color{black} The uniform P-matrix condition, to be defined shortly, can be interpreted as a relaxation of the strong monotonicity property requiring that: for some $\mu>0$,
\[\langle F(x)-F(y),x-y\rangle \ge \mu\|x-y\|_2^2\qquad\hbox{for all }x,y\in K.\] The strong monotonicity property implies, in particular, that the following relation holds 
\[\max_{1\le j\le m}[F(x)-F(y)]_j\cdot [x-y]_j\geq \mu\|x-y\|_2^2\qquad \hbox{for all }x,y\in K,\]
where $[\cdot]_j$ denotes the $j$th element of the vector.
The preceding relation is the defining relation for the uniform P-function property of the mapping $F(\cdot)$. This property (weaker than the strong monotonicity of the mapping) guarantees the existence of a unique solution \textcolor{black}{to the VI$(K,F)$} according to Proposition~3.5.10 in~\cite{facchinei2003finite}, when the set $K$ has an appropriate Cartesian structure. 
}

\begin{definition}[Uniform P-function, Definition 2 \cite{parise2019variational}]\label{Def-P-function}
Given a set $K\subseteq \mathbb{R}^{m}$,
the mapping $F: K  \to \mathbb{R}^{m}$ is a uniform P-function on $K$ if there exists $\mu>0$ such that 
\[\max_{1\le j\le m}[F(x)-F(y)]_j\cdot [x-y]_j\geq \mu\|x-y\|_2^2\qquad \hbox{for all }x,y\in K,\]
where $[\cdot]_j$ denotes the $j$th element of the vector.
\end{definition}

It turns out that, if $F:K\to\mathbb{R}^{m}$ is a uniform P-function 
on the set $K$, then it is norm coercive on the set $K$, i.e.,
$\lim_{\|x\|_2\to\infty, x\in K}\|F(x)\|_2=+\infty$.

\begin{lemma}\label{Lem-P_matices-Aux-0}
Let mapping $F:K\to\mathbb{R}^{m}$ be a uniform P-function on the set $K$ {\color{black} with a constant $\mu>0$}. Then, the mapping $F(\cdot)$ is norm coercive over $K$.
\end{lemma}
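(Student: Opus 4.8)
The plan is to reduce the componentwise maximum inequality in the definition of a uniform P-function to a single global lower bound of the form $\|F(x)-F(y_0)\|_2\ge\mu\|x-y_0\|_2$, and then conclude coercivity by the reverse triangle inequality. First I would fix an arbitrary reference point $y_0\in K$ and apply Definition~\ref{Def-P-function} with $y=y_0$, which gives, for every $x\in K$,
\[
\max_{1\le j\le m}\,[F(x)-F(y_0)]_j\cdot[x-y_0]_j\ \ge\ \mu\,\|x-y_0\|_2^2 .
\]

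Next I would bound the left-hand side from above. If $j^*$ denotes an index attaining the maximum, then
\[
[F(x)-F(y_0)]_{j^*}\cdot[x-y_0]_{j^*}
\ \le\ \bigl|[F(x)-F(y_0)]_{j^*}\bigr|\cdot\bigl|[x-y_0]_{j^*}\bigr|
\ \le\ \|F(x)-F(y_0)\|_2\,\|x-y_0\|_2 ,
\]
since each absolute component value is dominated by the $\ell_2$-norm of its vector. Combining the two displays and dividing by $\|x-y_0\|_2$ (for $x\neq y_0$) yields the key estimate
\[
\|F(x)-F(y_0)\|_2\ \ge\ \mu\,\|x-y_0\|_2 .
\]

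Finally, I would invoke the reverse triangle inequality twice: $\|F(x)\|_2\ge\|F(x)-F(y_0)\|_2-\|F(y_0)\|_2$ and $\|x-y_0\|_2\ge\|x\|_2-\|y_0\|_2$, giving
\[
\|F(x)\|_2\ \ge\ \mu\,\|x\|_2-\mu\,\|y_0\|_2-\|F(y_0)\|_2 .
\]
Because $y_0$ is fixed, the last two terms are constants, so the right-hand side tends to $+\infty$ as $\|x\|_2\to\infty$ with $x\in K$; hence $\|F(x)\|_2\to+\infty$, which is exactly norm coercivity over $K$.

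I do not anticipate a genuine obstacle here: the only step requiring care is the upper bound on the componentwise maximum, where one must be careful that the maximizing product (which could a priori be negative for a poor choice of index) is correctly controlled — this is handled by passing through absolute values at the maximizing index and then to the $\ell_2$-norms. Everything else is a routine application of the triangle inequality, and the argument is uniform in $x$ since $\mu$ and $y_0$ do not depend on $x$.
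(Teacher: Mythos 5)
Your proof is correct and follows essentially the same route the paper indicates: the paper's proof is only a citation to \cite[Lemma 4]{arefi2025quasinash}, described as using the uniform P-function property together with H\"older's inequality, and your key estimate $\|F(x)-F(y_0)\|_2\ge\mu\|x-y_0\|_2$ (obtained by bounding the maximizing product by the product of $\ell_2$-norms, a H\"older/Cauchy--Schwarz-type step) followed by the reverse triangle inequality is exactly that argument, written out in full.
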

\begin{proof}
\textcolor{black}{By H\"older's inequality, we have for all $x,y\in K$, 
\[\|F(x)-F(y)\|_2\|x-y\|_2\ge \sum_{i=1}^{m}|
[F(x)-F(y)]_i[x-y]_i|. \]
Since $F(\cdot)$ is a uniform P-function, it follows that 
\begin{align*}
\sum_{i=1}^{m}\!|
[\!F(x)-F(y)\!]_i[x-y]_i| \ge \max_{1\le j\le m} [\!F(x)-F(y)\!]_j\cdot [x-y]_j\ge \mu\|x-y\|^2_2.
\end{align*}
Therefore,  
\[\|F(x)-F(y)\|_2\ge \mu\|x-y\|_2\qquad\hbox{for all $x,y\in K$},\]
implying that 
\[\|F(x)\|_2\ge \|F(x)-F(y)\|_2-\|F(y)\|_2\ge \mu\|x-y\|_2- \|F(y)\|_2.\]
By keeping $y\in K$ fixed and letting $\|x\|_2\to\infty$, with $x\in K$,
we obtain 
\[\lim_{\substack{\|x\|_2\to\infty, \ x\in K}}\|F(x)\|_2=+\infty.\] \hfill\hfill$\square$}
\end{proof}

Next, we consider a differentiable mapping $F(\cdot)$ with the uniform P-function property over a closed convex set $K$ and provide the implications of this property for the Jacobian $\nabla F(\cdot)$.

\begin{theorem}\label{thm-P-function}
    Let  $K\subseteq\mathbb{R}^m$ be a closed convex set and let $F:K\to\mathbb{R}^{m}$ be a uniform P-function on the set $K$ {\color{black} with a constant $\mu>0$}. Assume that the mapping $F(\cdot)$ is continuously differentiable over the set $K$. Then, we have: 
    \begin{itemize}
        \item [(a)] If $K=K_1\times\cdots \times K_m$, where each $K_i\subset\mathbb{R}$ is a closed convex set, then all principal sub-matrices of $\nabla F(x)$, for all $x\in K$, have singular values bounded away from zero by $\mu>0$.
        Also,
        all principal minors of the Jacobian $\nabla F(\cdot)$ are nonzero on the set $K$. 
        \item [(b)] If the set $K$ is contained in some open convex set $\mathcal{O}$ and $F(\cdot)$ is continuously differentiable over $\mathcal{O}$, then all principal sub-matrices of $\nabla F(x)$, for all $x\in K$, have singular values bounded away from zero by $\mu>0$. Consequently, all principal minors of the Jacobian $\nabla F(\cdot)$ are nonzero on the set $\mathcal{O}$. 
    \end{itemize}
\end{theorem}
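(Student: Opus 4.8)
The plan is to pass from the uniform P-function inequality to a pointwise lower bound on the singular values of every principal sub-matrix $[\nabla F(x)]_M$ by means of a first-order (directional) argument. Fix $x\in K$ and a \emph{feasible} direction $d$, meaning $x+td\in K$ for all sufficiently small $t>0$, and set $y=x+td$. Since $F$ is continuously differentiable, $[F(y)-F(x)]_j=t\,[\nabla F(x)d]_j+o(t)$, so $[F(y)-F(x)]_j\,[y-x]_j=t^2[\nabla F(x)d]_j\,d_j+o(t^2)$ for each of the finitely many coordinates $j$. Applying the uniform P-function inequality to the pair $x,y$, dividing by $t^2$, and letting $t\downarrow 0$ yields the directional inequality
\[\max_{1\le j\le m}[\nabla F(x)d]_j\,d_j\ \ge\ \mu\,\|d\|_2^2,\]
which is the workhorse of the whole proof.

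Next I would convert this into a singular-value bound. Fix a nonempty $M\subseteq\{1,\dots,m\}$ and a vector $z\in\mathbb{R}^{|M|}$, and let $d$ be its zero-extension to $\mathbb{R}^m$, so that $d$ is supported on $M$ and $[\nabla F(x)d]_j=([\nabla F(x)]_M z)_j$ for every $j\in M$. \emph{Once the directional inequality is available for this $d$}, it reads $\max_{j\in M}([\nabla F(x)]_M z)_j\,z_j\ge\mu\|z\|_2^2$; bounding the left-hand side by $\|[\nabla F(x)]_M z\|_\infty\,\|z\|_\infty\le\|[\nabla F(x)]_M z\|_2\,\|z\|_2$ and cancelling one factor of $\|z\|_2$ gives $\|[\nabla F(x)]_M z\|_2\ge\mu\|z\|_2$ for every $z$, i.e. $\sigma_{\min}\big([\nabla F(x)]_M\big)\ge\mu$. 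A positive smallest singular value forces $\det[\nabla F(x)]_M\neq 0$, which is exactly the assertion about principal minors.

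The one genuine difficulty is that the directional inequality is immediate only for feasible directions, whereas the singular-value step needs it for \emph{every} $z$ supported on $M$, hence for every $d$ in that subspace. This is precisely where the two hypotheses enter. In case (a), the relative interior $\mathrm{ri}(K)=\prod_i\mathrm{int}(K_i)$ is dense in $K$, and at any $x\in\mathrm{ri}(K)$ every direction (in particular both $\pm d$) is feasible, so the directional inequality holds for all $d$ and thus $\sigma_{\min}([\nabla F(x)]_M)\ge\mu$ on $\mathrm{ri}(K)$; continuity of $\nabla F$ on $K$ together with the continuity of singular values then propagates the bound to all of $K$. In case (b), the inclusion $K\subseteq\mathcal O$ with $\mathcal O$ open and $F\in C^1(\mathcal O)$ makes $\nabla F$ unambiguously defined and continuous on a neighborhood of each point of $K$; the same relative-interior-plus-continuity argument delivers $\sigma_{\min}([\nabla F(x)]_M)\ge\mu$ for $x\in K$, and since $x\mapsto\det[\nabla F(x)]_M$ is then continuous and nonvanishing on $K$, it remains nonzero on the surrounding open set $\mathcal O$.

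I expect the feasibility/boundary step to be the main obstacle, and I would guard it in two ways. First, I would reduce, if necessary, to the coordinates for which $K_i$ is non-degenerate: a factor $K_i=\{c_i\}$ carries no information from the P-function property, since the corresponding coordinate of $x-y$ vanishes identically and can never attain the defining maximum, so the relevant bounds concern the sub-matrices indexed by the non-degenerate coordinates. Second, I would make the $o(t)$ control uniform over the finitely many coordinates $j$, so that the interchange $\max_j\big(t^2 c_j+o(t^2)\big)=t^2\max_j c_j+o(t^2)$ used in the limit is legitimate. With these two points checked, the reduction to singular values and the nonvanishing of the minors follow exactly as above.
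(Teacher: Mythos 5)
Your part (a) is correct, and its treatment of the boundary is genuinely different from (and more robust than) the paper's. The paper works at an arbitrary $x\in K$ directly: it takes the singular-vector direction $w(x)$, asserts that by the one-dimensional Cartesian structure either $x+\epsilon w(x)\in K$ or $x-\epsilon w(x)\in K$, applies the mean value theorem coordinate-wise along that segment, and then lets $\epsilon\to0$; the singular-value bound is then extracted via the pair $u(x),v(x)$ and the Cauchy--Schwarz inequality. You instead establish the directional inequality only at points where all directions are feasible, and propagate the bound $\sigma_{\min}\bigl([\nabla F(x)]_M\bigr)\ge\mu$ to the rest of $K$ by density of the relative interior and continuity of $\nabla F$ and of singular values. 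This buys something concrete: at a corner such as $x=(0,1)\in[0,1]^2$ with $w=(1,1)/\sqrt2$, \emph{neither} $x+\epsilon w$ nor $x-\epsilon w$ lies in $K$, so the paper's feasibility claim is not available there, while your density-plus-continuity step covers such points without any case analysis. Your operator-norm route to $\sigma_{\min}\ge\mu$ (valid for every $z$ supported on $M$) is equivalent to the paper's singular-vector computation; your Taylor expansion replaces the paper's mean value theorem; these are cosmetic differences.

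Part (b), however, has a genuine gap in its last step. You claim that since $x\mapsto\det[\nabla F(x)]_M$ is continuous and nonvanishing on $K$, ``it remains nonzero on the surrounding open set $\mathcal{O}$.'' That inference is false: continuity of a nonvanishing function on a closed set yields nonvanishing only on \emph{some} neighborhood of $K$, never on an arbitrary open superset. Concretely, take $m=1$, $K=[0,1]$, $\mathcal{O}=\mathbb{R}$, and any $C^1$ function with $F'\equiv1$ on $[0,1]$ but $F'(2)=0$; this $F$ is a uniform P-function on $K$ with $\mu=1$, yet the $1\times1$ principal minor vanishes at $2\in\mathcal{O}$. This example also shows the deeper point: the stated conclusion on $\mathcal{O}$ cannot be reached from the P-function inequality restricted to pairs in $K$, because that hypothesis places no constraint whatsoever on $\nabla F$ away from $K$. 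The paper's proof of (b) gets the $\mathcal{O}$-conclusion by running the entire mean-value/limiting argument with $\mathcal{O}$ in place of $K$ --- choosing $y_\epsilon=x+\epsilon w(x)\in\mathcal{O}$, which is possible since $\mathcal{O}$ is open, and applying the uniform P-function inequality to pairs of points of $\mathcal{O}$. To close your gap you must do the same, i.e., invoke the P-function property on $\mathcal{O}$ itself rather than appeal to continuity from $K$; a continuity argument from $K$ can at best give non-vanishing minors on a neighborhood of $K$, which is strictly weaker than what the theorem asserts.
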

\begin{proof}
(a)~Let $M\subseteq\{1,\ldots,m\}$ be an arbitrary 
nonempty subset of indices with the cardinality $r$. 
Consider an arbitrary point $x\in K$. 
Let $\nabla F_M(x)$ be the $r\times r$ sub-matrix obtained by deleting the $j$th row and $j$th column from the Jacobian matrix $\nabla F(x)$ for all $j\notin M$. 
Let $\sigma_{\min}(x)$ be the smallest singular value of $\nabla_M F(x)$, and let $u(x),v(x)\in\mathbb{R}^r$ be unit vectors such that 
 \[\nabla F_M(x)v(x)=\sigma_{\min}(x) u(x).\] 

Define the vector $w(x)\in\mathbb{R}^{m}$ as follows: 
\begin{equation}\label{eq-defwvec}
    w_i(x)=v_i(x)\quad\hbox{for $i\in M$}\qquad\hbox{and}\qquad w_i(x)=0 \qquad\hbox{for $i\notin M$}.
    \end{equation}

\def\e{\epsilon}
For the point $x\in K$ and the vector $w(x)$, due to the special Cartesian structure of the set $K$ (with one-dimensional convex sets), for some small enough $\epsilon>0$, we have 
$x+\e w(x)\in K$ or $x-\e w(x)\in K$.
Define the vector $y_\e$ as follows:
\[y_\e=\begin{cases}
    x+\e w(x) & \hbox{ if $x+\e w(x)\in K$},\cr
    x-\e w(x) & \hbox{ if $x+\e w(x)\notin K$, \ $x-\e w(x)\in K$}.
\end{cases}\]
By the mean value theorem, for each $i$, there is $s_i\in(0,1)$ such that 
 \[F_i(y_\e)-F_i(x)=\left\langle\nabla F_i\left(x+s_i(y_\e-x)\right),y_\e-x\right\rangle.\] 
 Since $x,y_\e\in K$, $s_i\in(0,1)$ for all $i=1,\ldots,m$,  and the set $K$ is convex, it follows that $x+s_i(y_\e-x) \in K$ for all $i$,
 so the Jacobian $\nabla F_i\left(x+s_i(y_\e-x)\right)$ is defined.
 Stacking $F_i(y_\e)-F_i(x)$, $i=1,\ldots,m$, in a column vector
 we can write 
 \[F(y_\e)-F(x)=\nabla F_{\bf{x}(\e)} (y_\e-x),\] 
 where $\nabla F_{\bf{x}(\e)}$ is the matrix with rows $\nabla F_i\left(x+s_i(y_\e-x)\right)$, $i=1,\ldots,m$. 
Since the mapping $F(\cdot)$ satisfies the uniform P-function property on the set $K$, we have that
\[\max_{1\leq j \leq m}
[F(y_\e)-F(x)]_j\cdot [y_\e-x]_j\ge \mu \|y_\e-x\|_2^2.\]
By the definition of $y_\e$, we have $y_\e-x=w(x)$ and, since $\|w(x)\|_2=1$ (see~\eqref{eq-defwvec}), it follows that 
\[\e^2 \max_{1\leq j \leq m}\left[\nabla F_{{\bf x}(\e)}w(x)\right]_j
\cdot w_j(x)\geq \mu\e^2.\] 
Letting $\e\to0$, and noting that $\nabla F_{{\bf x}(\e)}\to \nabla F(x)$, we obtain that 
\[\max_{1\leq j \leq m} \left[\nabla F(x)w(x)\right]_j
\cdot w_j(x)\geq \mu.\]

By the definition of the vector $w(x)$ in~\eqref{eq-defwvec}, we have that 
$w_j(x)=0$ for $j\not\in M$, so it follows that
\[\sum_{i\in M} \left|\left[\nabla F(x)w(x)\right]_i \cdot w_i(x) \right|\ge 
\max_{j\in M} \left[\nabla F(x)w(x)\right]_j
\cdot w_j(x)\geq \mu.\]
Further, by the definition of the vector $w(x)$, we have that 
\[w_i(x)=v_i(x),\quad [\nabla F(x)w(x)]_i=[\nabla F_M(x)v(x)]_i\qquad\hbox{for all }i\in M.\]
By combining the preceding two relations, we obtain that 
\[
\sum_{i\in M} \left|\left[\nabla F_M(x)v(x)\right]_i \cdot v_i(x) \right|\ge \mu.\]
Since $\nabla F_M(x)v(x)=\sigma_{\min}(x) u(x)$ and $\sigma_{\min}(x)\ge0$, it follows that 
\[\sigma_{\min}(x) \, \sum_{i\in M} |u_i(x)|\, |v_i(x)| \ge \mu.\]
By using the Cauchy-Schwarz inequality and $\|u(x)\|_2=1$, $\|v(x)\|_2=1$, from the preceding inequality we obtain $\sigma_{\min}(x)\ge \mu$. 
Thus,
all the singular values of the matrix $\nabla F_M(x)$ are uniformly bounded away from zero on the set $K$. As a consequence $\det(\nabla F_M(x))\ne0$.

\noindent
(b)~The proof is along the lines of the proof of part (a) with minor adjustments. We replace the set $K$ with the open convex set $\mathcal{O}$. We let $y_\e=x+\e w(x)\in\mathcal{O}$ for some small enough $\e>0$ so that $y_\e\in \mathcal{O}$, which is possible  since $\mathcal{O}$ is an open set. Hence, $x,y_\e\in \mathcal{O}$, $s_i\in(0,1)$ for all $i$, which by the convexity of the set $\mathcal{O}$ implies that  
$x+s_i(y_\e-x) \in \mathcal{O}$ for all $i$.
The rest of proof follows the same line of argument as in the part (a).
\hfill $\square$
\end{proof}
\subsection{Non-Singularity of Jacobian}\label{subs-nonsing}
The non-singularity condition on the Jacobian can be verified by checking the positive definiteness of the matrices
$\nabla{F(x)} \nabla{F(x)}^\top$ for all $x$.
In particular, by Lemma 3 in~\cite{NonMonotoneVI}, we have that $\det\left(\nabla{F(x)}\right) \neq 0$ if and only if the matrix $\nabla{F(x)} \nabla{F(x)}^\top$ is positive definite.

{\color{black} As we will see, another condition that guarantees the non-singularity of the Jacobian $\nabla F(\cdot)$ is the uniform P-matrix condition~\cite{parise2019variational}, the definition of which is somewhat involved. The definition has been originally proposed in~\cite{parise2019variational} (Definition~D.1), which is motivated by the P-matrix concept, 
stating that: a matrix $A$ is a P-matrix if all
its principal minors have positive determinant, which is equivalent to (by Theorem~3.3 in~\cite{fiedler-ptak-Pmatrices}): for any nonzero $w\in\mathbb{R}^m$, there exists a positive definite diagonal matrix $H_w$ such that 
\[\langle w, H_w A w\rangle>0.\]
The P-matrices have been used in connection to properties of the Jacobian $\nabla F(\cdot)$ to investigate the existence of solutions to Cartesian VI($K,F$), where the set $K$ is a Cartesian product of sets (see Section~3.5 in~\cite{facchinei2003finite}).
When requiring that the Jacobian $\nabla F(x)$ is a P-matrix for all $x\in K$,
the P-matrix condition extends to the following condition:
for any nonzero $w\in\mathbb{R}^m$ and any $x\in K$, there exists a positive definite diagonal matrix $H_{w,x}$ such that 
\[\langle w, H_{w,x} \nabla F(x) w\rangle>0.\]
Strengthening this condition further to have $\langle w, H_{w,x} \nabla F(x) w\rangle\ge \eta\|w\|_2^2$, for some $\eta>0$, and allowing for the Jacobian $\nabla F(x)$ to be evaluated at different points for different coordinates lead to the following definition.
In the definition, for a matrix $A$, we use $[A]_{i:}$ to denote the $i$-the row of the matrix.}

\begin{definition}[Uniform P-matrix Condition\footnote{This definition is a slight modification of Definition~D.1 in~\cite{parise2019variational} where the set $K$ is taken to be the entire $\mathbb{R}^m$ space.}, Definition~D.1 \cite{parise2019variational}]\label{Def-P matrix}
The mapping $x\mapsto A(x)\in \mathbb{R}^{m\times m}$ satisfies a  uniform P-matrix condition on a set $K\subseteq\mathbb{R}^{m}$ 
if  there exists $\eta> 0$ such that for any $w\in\mathbb{R}^m$ and any collection ${\bf x}_m=\{x^i,i=1,\ldots,m\}$ of $m$ points in  the set $K$, there is a diagonal matrix $H_{w,{\bf x}_m}\succ 0$ such that 
\[\langle w, H_{w,{\bf x}_m} A_{{\bf x}_m} w\rangle \geq \eta \|w\|_2^2\]
and {\color{black} $\sup_{w\in\mathbb{R}^m} \sup_{z\in K^m} \lambda_{\max}(H_{w,z}) < \infty$},
where $A_{{\bf x}_m}$ is the matrix constructed from the matrices $A(x^1),\ldots,A(x^m)$, 
by taking the $i$-th row of the matrix $A(x^i)$ and placing it in the $i$-th row of $A_{{\bf x}_m}$ for all $i=1,\ldots,m,$ i.e.,
\[A_{{\bf x}_m} =\left[\begin{array}{c}
[A(x^1)]_{1:}\cr\vdots\cr
[A(x^m)]_{m:}
\end{array}\right].\]
\end{definition}

An implication of this definition is that, if $\nabla F(\cdot)$
satisfies the uniform P-matrix condition on the set $K$, then the mapping  $F(\cdot)$ is  a uniform P-function on $K$, shown in Proposition~3 c)
in~\cite{parise2019variational}.
}

When a mapping $x\mapsto A(x)\in \mathbb{R}^{m\times m}$ satisfies the uniform P-matrix condition, all singular values of principal sub-matrices of $A(x)$ have a uniform lower bound, as shown in the following result.

\begin{theorem}\label{Thm-Singval-Pmat}
Let a mapping $x\mapsto A(x)\in \mathbb{R}^{m\times m}$ satisfy the uniform P-matrix condition 
over a set $K\subseteq\mathbb{R}^{m}$. Then, all principal sub-matrices of $A(x)$ have singular values uniformly bounded away from zero on the set $K$. In particular, all principal minors\footnote{Given an $m\times m$ matrix $A$, an $r\times r$ principal sub-matrix of $A$ is the matrix obtained by taking the entries of $A$ that lie in the same set of $r$ rows and columns. An $r\times r$ principal minor of $A$ is the determinant of an $r\times r$ principal sub-matrix of $A$; see page 494 of \cite{meyer2023matrix}.} of $A(x)$ are nonzero for all $x\in K$.
\end{theorem}
\begin{proof}
Consider an arbitrary point $x\in K$ and an arbitrary 
nonempty subset $M\subseteq\{1,\ldots,m\}$ of indices with cardinality $r$.
Consider the $r\times r$ matrix $A_M(x)$ obtained by deleting the $j$th row and $j$th column from the matrix $A(x)$ for all $j\notin M$. 
Let $\sigma_M(x)$ be an arbitrary singular value of the matrix $A_M(x)$. By the definition of the singular value there are two unit vectors $u(x),v(x)\in \mathbb{R}^{r}$ such that $A_M(x)v(x)=\sigma_M(x)u(x)$. 

Define the vector $w(x)\in\mathbb{R}^m$ with entries 
\[w_i(x)=v_i(x)\quad\hbox{for $i\in M$}\qquad\hbox{and}\qquad w_i(x)=0 \qquad\hbox{for $i\notin M$}.\]
Since the mapping $x\mapsto A(x)$ satisfies the uniform P-matrix condition 
over $K$,
  by Definition~\ref{Def-P matrix}, with ${\bf x_m}$ consisting of $m$ copies of the vector $x$ ($x^i=x$ for all $i$), we have $A_{{\bf x}_m}= A(x)$ and 
  \[\langle w(x), H_{w(x),x}A(x)w(x)\rangle \geq \eta \|w(x)\|_2^2,\]
  where $H_{w(x),x}$ is a positive definite diagonal matrix and $\eta>0$ is independent of $x$.
By the construction of the vector $w(x)$, we have that $\|w(x)\|_2=\|v(x)\|_2=1$, and the preceding relation is equivalent to
  \[\sum_{i\in M} v_i(x)\cdot [H_{w(x),x}A(x)w(x)]_i \geq \eta.\]
Using  the Cauchy-Schwarz inequality and the fact that $v(x)$ is a unit vector,
we obtain
\[\sqrt{\sum_{i\in M}  |[H_{w(x),x}A(x)w(x)]_i|^2}\geq \eta,\]
implying that 
\[\max_{i\in M} [H_{w(x),x}]_{ii}
\sqrt{\sum_{i\in M}  |[A(x)w(x)]_i|^2}\geq \eta,\]
where we use the fact that the matrix $H_{w(x),x}$ is diagonal and positive definite.
Since $A_M(x)v(x)=\sigma_M(x)u(x)$, we have
\[[A(x)w(x)]_i=[A_M(x)v(x)]_i=\sigma_M(x)u_i(x)\qquad\hbox{for all $i\in M$}.\]
Therefore, from the preceding relation, we obtain
\[\max_{i\in M} [H_{w(x),x}]_{ii}\cdot\sigma_M(x) 
\sqrt{\sum_{i\in M} u_i^2}\geq \eta.\]
Since $\|u\|_2=1$ and
$\max_w \max_{z\in K^m} \lambda_{\max}(H_{w,z}) < \infty$ by the uniform P-matrix condition, it follows that 
\[\sigma_M(x)\ge \frac{\eta}{C}\qquad\hbox{where} \quad C=\max_w \max_{z\in K^m} \lambda_{\max}(H_{w,z}),\]
showing that
a uniform lower bound on any singular value $\sigma_M(x)$ of the sub-matrix $A_M(x)$ over the set $K$ exists.  Since the singular values $\sigma_M(x)$ of $A_M(x)$ are nonzero over the set $K$, it follows that $\det(A_M(x))\ne0$ for all $x\in K$ and any nonempty index set $M\subseteq\{1,\ldots,m\}$.
    \hfill $\square$
\end{proof}

\section{Non-Monotone {\color{black} Variational Inequalities}}\label{Sec-Main result-VI}
In this section, we extend the existence result of Theorem~\ref{Thm-main theorem-prime} to the general
VI$(K,F)$, where the mapping $F(\cdot)$ is not necessarily monotone. To do so,
we work with the normal mapping $F^{\rm nor}_K(\cdot)$ associated with the VI and its generalized Jacobian $\partial F^{\rm nor}_K(\cdot)$. 
In Subsection~\ref{ssec-main}, we provide sufficient conditions for the existence of solutions to non-monotone VI$(K,F)$, 
assuming that the normal mapping $F^{\rm nor}_K(\cdot)$ is norm coercive and some non-singularity properties of the generalized Jacobian $\partial F^{\rm nor}_K(\cdot)$ hold, such as a maximal rank. In Subsections~\ref{ssec-normcoercivity} and~\ref{ssec-gen-jac-normal-map}, respectively, we elaborate on these two assumptions.

\subsection{Sufficient Conditions for Existence of Solutions}\label{ssec-main}
\def\argmin{{\mathop{\rm argmin}}}
\textcolor{black}{For a (nonempty) closed convex set $K\subset\mathbb{R}^m$, the projection mapping $\Pi_K[\cdot]:\mathbb{R}^m\to K$ is defined by $\Pi_K[x]:=\argmin_{z\in K}\|x-z\|$ for all $x\in\mathbb{R}^m$. The normal mapping $F_K^{\rm nor}(\cdot)$ associated with a VI$(K,F)$} is defined as follows (\cite{facchinei2003finite}, page 83): 
\[F_K^{\rm nor}(x)=x-\Pi_K\left[x\right]+F(\Pi_K\left[x\right])
\qquad\hbox{for all $x\in\mathbb{R}^{m}$}.\]
The following classical result provides a characterization of the solution set of a VI$(K,F)$ via normal mapping $F_K^{\rm nor}(\cdot)$.

\begin{theorem}[Proposition 1.5.9 \cite{facchinei2003finite}]\label{Thm-auxiliary-normal map solution}
    Let $K\subseteq \mathbb{R}^{m}$ be closed convex and $F:K\to \mathbb{R}^{m}$ be arbitrary. A vector $x^*$ belongs to {\rm SOL}$(K,F)$ if and only if there is a vector $v$ such that $x^*=\Pi_K[v]$ and $F_K^{\rm nor}(v)=0$.
\end{theorem}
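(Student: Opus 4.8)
The plan is to reduce both implications to the variational (obtuse-angle) characterization of the Euclidean projection onto a closed convex set: for closed convex $K$ and any $v\in\mathbb{R}^m$, a point $z\in K$ equals $\Pi_K[v]$ if and only if $\langle v-z,\,x-z\rangle\le 0$ for all $x\in K$. This projection criterion is the only nontrivial ingredient; everything else is algebraic bookkeeping with the definition $F_K^{\rm nor}(v)=v-\Pi_K[v]+F(\Pi_K[v])$.

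For the forward implication, suppose $x^*\in\mathrm{SOL}(K,F)$. I would set $v:=x^*-F(x^*)$ and verify that this $v$ does the job. First I would show $\Pi_K[v]=x^*$: since $x^*\in K$, by the projection characterization it suffices to check $\langle v-x^*,\,x-x^*\rangle\le 0$ for all $x\in K$; but $v-x^*=-F(x^*)$, so this inequality is exactly $\langle F(x^*),\,x-x^*\rangle\ge 0$, which holds because $x^*$ solves the VI. Having established $\Pi_K[v]=x^*$, I would substitute into the definition of the normal map to get $F_K^{\rm nor}(v)=v-x^*+F(x^*)=(x^*-F(x^*))-x^*+F(x^*)=0$.

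For the reverse implication, suppose some $v$ satisfies $x^*=\Pi_K[v]$ and $F_K^{\rm nor}(v)=0$. Expanding the normal map gives $0=v-\Pi_K[v]+F(\Pi_K[v])=v-x^*+F(x^*)$, hence $v-x^*=-F(x^*)$. Because $x^*=\Pi_K[v]$, the projection characterization yields $x^*\in K$ together with $\langle v-x^*,\,x-x^*\rangle\le 0$ for all $x\in K$; substituting $v-x^*=-F(x^*)$ turns this into $\langle F(x^*),\,x-x^*\rangle\ge 0$ for all $x\in K$, i.e. $x^*\in\mathrm{SOL}(K,F)$.

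There is no substantive obstacle here; the main care point is getting the sign of the projection inequality right and invoking the assumed closedness and convexity of $K$, which are exactly what guarantees that $\Pi_K$ is well defined and single-valued and that the obtuse-angle criterion is both necessary and sufficient. I would state the projection characterization explicitly (as a cited lemma) so that the two directions read symmetrically, each one simply passing between the VI inequality $\langle F(x^*),\,x-x^*\rangle\ge 0$ and the projection inequality $\langle v-x^*,\,x-x^*\rangle\le 0$ through the identity $v-x^*=-F(x^*)$.
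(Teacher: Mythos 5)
Your proof is correct. The paper does not prove this statement at all --- it is quoted as a classical result (Proposition 1.5.9 of \cite{facchinei2003finite}) --- and your argument, which passes between the VI inequality $\langle F(x^*),x-x^*\rangle\ge 0$ and the obtuse-angle projection characterization $\langle v-x^*,x-x^*\rangle\le 0$ through the identity $v-x^*=-F(x^*)$ (with the explicit choice $v=x^*-F(x^*)$ in the forward direction), is exactly the standard proof found in that reference.
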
 

The normal mapping $F_K^{\rm nor}(\cdot)$ associated with a VI$(K,F)$ need not be differentiable even when the VI mapping $F(\cdot)$ is. To deal with this,
we will work with a concept of generalized Jacobian of a mapping.
\begin{definition}[Generalized Jacobian, Definition 1 \cite{clarke1976inverse}] \label{Def-Generalized Jacobian}
The generalized Jacobian of a mapping $F(\cdot)$ at a point $x\in \mathbb{R}^{m}$, denoted by $\partial F(x)$, is the convex hull of all matrices obtained in the limit
$\lim_{k\rightarrow \infty} \nabla F(x^k)$ along sequences $\{x^k\}$ converging to $x$
such that $F(\cdot)$ is differentiable at $x^k$ for all~$k$, 
\begin{align*}
\partial F(x)={\rm Conv}\left\{M\mid M=\lim_{k\rightarrow \infty} \nabla F(x^k),
\ \lim_{k\to\infty}x^k=x, \ \nabla F(x^k) \hbox{ exists for all~$k$}\right\},
\end{align*}
where $\text{Conv}(X)$ denotes the convex hull of a set $X$.
\end{definition}

The following classical result characterizes the generalized Jacobian $\partial F(\cdot)$ for locally Lipschitz mapping $F(\cdot)$, where  {\it $F(\cdot)$ being locally Lipschitz at a point $x$} means that $F(\cdot)$ is Lipschitz continuous in a neighborhood of the point $x$.
\begin{theorem}[Generalized Jacobian
Properties, Proposition 1 \cite{clarke1976inverse}]\label{Thm-Proposition 1 of Clark paper}
    Let mapping $F(\cdot)$ be locally Lipschitz at a point $x\in \mathbb{R}^m$. Then, the generalized Jacobian $\partial F(x)$ is a nonempty, compact, and convex set in the space of all $m\times m$ matrices equipped with norm $\|M\|=\max_{1\le i,j\le m}|m_{i,j}|$.
\end{theorem}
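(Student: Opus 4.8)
The plan is to verify the three asserted properties---nonemptiness, compactness, and convexity---directly from Definition~\ref{Def-Generalized Jacobian}, leaning on two classical facts about locally Lipschitz mappings. Fix $L>0$ and a neighborhood $U$ of $x$ on which $F(\cdot)$ is $L$-Lipschitz. The first fact I would invoke is Rademacher's theorem: a locally Lipschitz mapping is differentiable Lebesgue-almost everywhere, so the set $D\subseteq U$ of points at which $\nabla F$ exists has full measure and, in particular, is dense in $U$. The second fact is that wherever $\nabla F$ exists on $U$, each partial derivative is bounded in absolute value by $L$, whence $\|\nabla F(y)\|\le L$ for every $y\in D$ in the entrywise norm of the statement. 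Convexity of $\partial F(x)$ is then immediate, since the generalized Jacobian is \emph{defined} as a convex hull.

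For nonemptiness and boundedness I would argue as follows. Using density of $D$, choose a sequence $\{x^k\}\subseteq D$ with $x^k\to x$. Since every $\nabla F(x^k)$ lies in the bounded set $\{M:\|M\|\le L\}$, the Bolzano--Weierstrass theorem in the finite-dimensional space of $m\times m$ matrices yields a subsequence converging to some matrix $M$. By construction $M$ belongs to the generating set of limit matrices, call it $S$, so $S\neq\emptyset$ and hence $\partial F(x)={\rm Conv}(S)\neq\emptyset$. Boundedness is inherited: each element of $S$ is a limit of matrices of norm at most $L$, so $S\subseteq\{M:\|M\|\le L\}$, and taking the convex hull preserves this bound.

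The step demanding the most care is closedness of $S$, which I would establish by a diagonal argument. Suppose $M_n\in S$ with $M_n\to M$. For each $n$, by definition of $S$ there is $y_n\in D$ with $\|y_n-x\|\le 1/n$ and $\|\nabla F(y_n)-M_n\|\le 1/n$. Then $y_n\to x$, $y_n\in D$, and $\|\nabla F(y_n)-M\|\le\|\nabla F(y_n)-M_n\|+\|M_n-M\|\to 0$, so $M\in S$. Thus $S$ is closed and bounded, hence compact. To conclude, I would invoke the fact that the convex hull of a compact set in finite dimensions is compact, a consequence of Carath\'eodory's theorem: every point of ${\rm Conv}(S)$ is a convex combination of at most $m^2+1$ points of $S$, realizing ${\rm Conv}(S)$ as the continuous image of the (compact) product of a simplex with $(m^2+1)$ copies of $S$. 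This gives compactness of $\partial F(x)$ and finishes the proof.

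The main obstacles are (i) correctly invoking Rademacher's theorem to guarantee the generating set $S$ is nonempty, and (ii) the diagonal extraction establishing closedness of $S$; the final passage from compactness of $S$ to compactness of its convex hull is then a routine application of Carath\'eodory's theorem.
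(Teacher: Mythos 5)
Your proof is correct. The paper itself does not prove this statement---it imports it verbatim as Proposition~1 of Clarke's 1976 paper---and your argument (Rademacher's theorem for nonemptiness of the generating set, the Lipschitz bound plus a diagonal extraction for its compactness, and Carath\'eodory's theorem to pass compactness to the convex hull) is precisely the standard proof given in that reference, so your proposal agrees with the intended argument.
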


We now state our main solution existence result for a VI$(K,F)$.

{\color{black} 
\begin{theorem}\label{thm-Normal-Mapping_Sol_Existence0}
Let $K\subseteq \mathbb{R}^{m}$ be a nonempty closed convex set, the mapping $F:K\to\mathbb{R}^m$ be locally Lipschitz continuous on $K$, and the normal mapping $F^{\rm nor}_K(\cdot)$ be norm coercive. Assume 
that, at every point $x\in\mathbb{R}^m$ where $F_K^{\rm nor}(x)\neq 0$,
we have 
 \[0\notin \left\{M^\top F^{\rm nor}_K(x)\mid M\in\partial F^{\rm nor}_K(x)\right\}.\] 
Then, the solution set {\rm SOL}$(K,F)$ is non-empty and compact.
\end{theorem}

\begin{proof} 
Consider the following optimization problem
\begin{equation}\label{eq-optpro}
\inf_{x\in \mathbb{R}^{m}}\|F_K^{\rm nor}(x)\|_2^2.
\end{equation}
    By the continuity of the projection mapping $\Pi_K[\cdot]$ and the assumed continuity of the mapping $F(\cdot)$,
    we have that $F_K^{\rm nor}(v)=v-\Pi_K\left[v\right]+F(\Pi_K\left[v\right])$ is continuous. 
     By the norm coercivity of $F_K^{\rm nor}(\cdot)$, the objective function of the problem is coercive.
    Hence, by the Weierstrass Theorem (Proposition~2.1.1 in~\cite{bno2003convex}), the problem has a finite optimal value and its solution set is nonempty and compact, i.e.,   
    \[X^*=\left\{x\in\mathbb{R}^m\mid \|F_K^{\rm nor}(\bar x)\|_2^2= b\right\}\ne\emptyset \ \hbox{compact},\qquad b=\min_{x\in \mathbb{R}^{m}}\|F_K^{\rm nor}(x)\|_2^2.\]
    
    We want to prove that $b=0$, which allows us to relate the set $X^*$ to the solutions of VI$(K,F)$.
    To arrive at a contradiction, assume that $b>0$. Thus, 
    \begin{equation}\label{eq-notzero}
        F_K^{\rm nor}(\bar x)\ne0\qquad\hbox{for any $\bar x\in X^*$}. 
    \end{equation}
    Since $F(\cdot)$ is locally Lipschitz continuous over the set $K$, the normal mapping $F_K^{\rm nor}(\cdot)$ is also locally Lipschitz continuous and, by Theorem~\ref{Thm-Proposition 1 of Clark paper}, the generalized Jacobian $\partial F_K^{\rm nor}(x)$ is nonempty, compact, and convex  for all $x\in\mathbb{R}^m$. 
    
    Let $\bar x\in X^*$ be arbitrary.
    Since $\bar x\in X^*$ solves the (unconstrained) minimization problem~\eqref{eq-optpro}, by the necessary optimality conditions  of Proposition~7.1.12 in~\cite{facchinei2003finite}, it follows that 
    \begin{equation}\label{eq-optim1}
    0\in\partial \|F^{\rm nor}_K(\bar x)\|^2_2.
    \end{equation}
    The norm function is continuously differentiable, so by the Clarke's chain rule (Proposition~7.1.11 in~\cite{facchinei2003finite}), we have that 
    \begin{equation}\label{eq-optim2}
    \partial \|F^{\rm nor}_K(\bar x)\|^2_2
    =\left\{2M^\top F^{\rm nor}_K(\bar x)\mid M\in\partial F^{\rm nor}_K(\bar x)\right\}.
    \end{equation}
    Relations~\eqref{eq-optim1} and \eqref{eq-optim2} imply that 
     \[0\in \left\{M^\top F^{\rm nor}_K(\bar x)\mid M\in\partial F^{\rm nor}_K(\bar x)\right\},\]
     while $F_K^{\rm nor}(\bar x)\ne0$ by relation~\eqref{eq-notzero}. This contradicts our assumption that 
     \[0\notin \{M^\top F^{\rm nor}_K(\bar x)\mid M\in\partial F^{\rm nor}_K(\bar x)\}\qquad \hbox{when 
     $F_K^{\rm nor}(\bar x)\ne0$}.\]
     Hence, we must have $b=0$ and $F_K^{\rm nor}(\bar x)=0$ for all $\bar x\in X^*$. By Theorem~\ref{Thm-auxiliary-normal map solution}, it follows
    that ${\rm SOL}(K,F)=\{\Pi_K[\bar x]\mid \bar x\in X^*\}$. The solution set SOL$(K,F)$ is nonempty since $X^*\ne\emptyset$. It is compact since $X^*$ is compact and the projection mapping is continuous.
    \hfill $\square$
\end{proof}


In the next example, we illustrate a case when the mapping $F(\cdot)$ is not a uniform P-function on $K$ and, thus, Proposition 3.5.10(b) of~\cite{facchinei2003finite} cannot be applied to assert the existence of solutions, while Theorem~\ref{thm-Normal-Mapping_Sol_Existence0} can be used.

\begin{example}\label{example-vi} Consider the set $K=K_1\times K_2$, with $K_1=K_2=[0,\infty)$, and the linear mapping $F(x)=[x_1+2x_2,3x_1+x_2]^\top$.  
The unique solution to VI$(K,F)$ is $x^*=[0,0]^\top$. 
The mapping $F(\cdot)$ is continuously differentiable.
    We will show that the normal mapping $F_K^{\rm nor}(\cdot)$ satisfies the conditions of Theorem~\ref{thm-Normal-Mapping_Sol_Existence0}.

The normal mapping $F_K^{\rm nor}(\cdot)$ is given by 
\begin{align}
    F_K^{\rm nor}(x) &=x - \Pi_K[x] + F(\Pi_K[x])=\begin{bmatrix}
x_1 \\
x_2
\end{bmatrix}-\begin{bmatrix}
\Pi_{K_1}[x_1] \\
\Pi_{K_2}[x_2]
\end{bmatrix}+\begin{bmatrix}
\Pi_{K_1}[x_1]+2\Pi_{K_2}[x_2] \\
3\Pi_{K_1}[x_1]+\Pi_{K_2}[x_2]
\end{bmatrix} \nonumber \\
&=\begin{bmatrix}
x_1  + 2\max\{x_2,0\}\\[1mm]
3\max\{x_1,0\} +x_2
\end{bmatrix}.\nonumber
\end{align}
The mapping $F_K^{\rm nor}(\cdot)$ is piecewise linear, given by
\[
\!F_K^{\rm nor}\!(x) \!=\!
\begin{cases}
[x_1 + 2x_2,\; 3x_1 +x_2]^\top & x\in R_1\!=\!\{x\in\mathbb{R}^2\mid  x_1 \ge 0,\;\! x_2 \ge 0\},\\[1mm]
[x_1,\;\! 3x_1+x_2]^\top & x\in R_2\!=\!
\{x\in\mathbb{R}^2\mid  x_1 \ge 0,\; x_2 \le 0\},\\[1mm]
[x_1+ 2x_2,\; x_2]^\top & x\in R_3\!=\! 
\{x\in\mathbb{R}^2\mid  x_1 \le 0,\;\! x_2 \ge 0\},\\[1mm]
[x_1,\; x_2]^\top&  x\in R_4\!=\!\{x\in\mathbb{R}^2\mid  x_1 \le 0,\;\! x_2 \le 0\}.
\end{cases}
\]
Figure~\ref{fig:example-normal} depicts the sets $R_i$ and corresponding expressions for the normal mapping for visualization.
\begin{figure}[h!]
    \centering
    \includegraphics[width=0.7\linewidth]{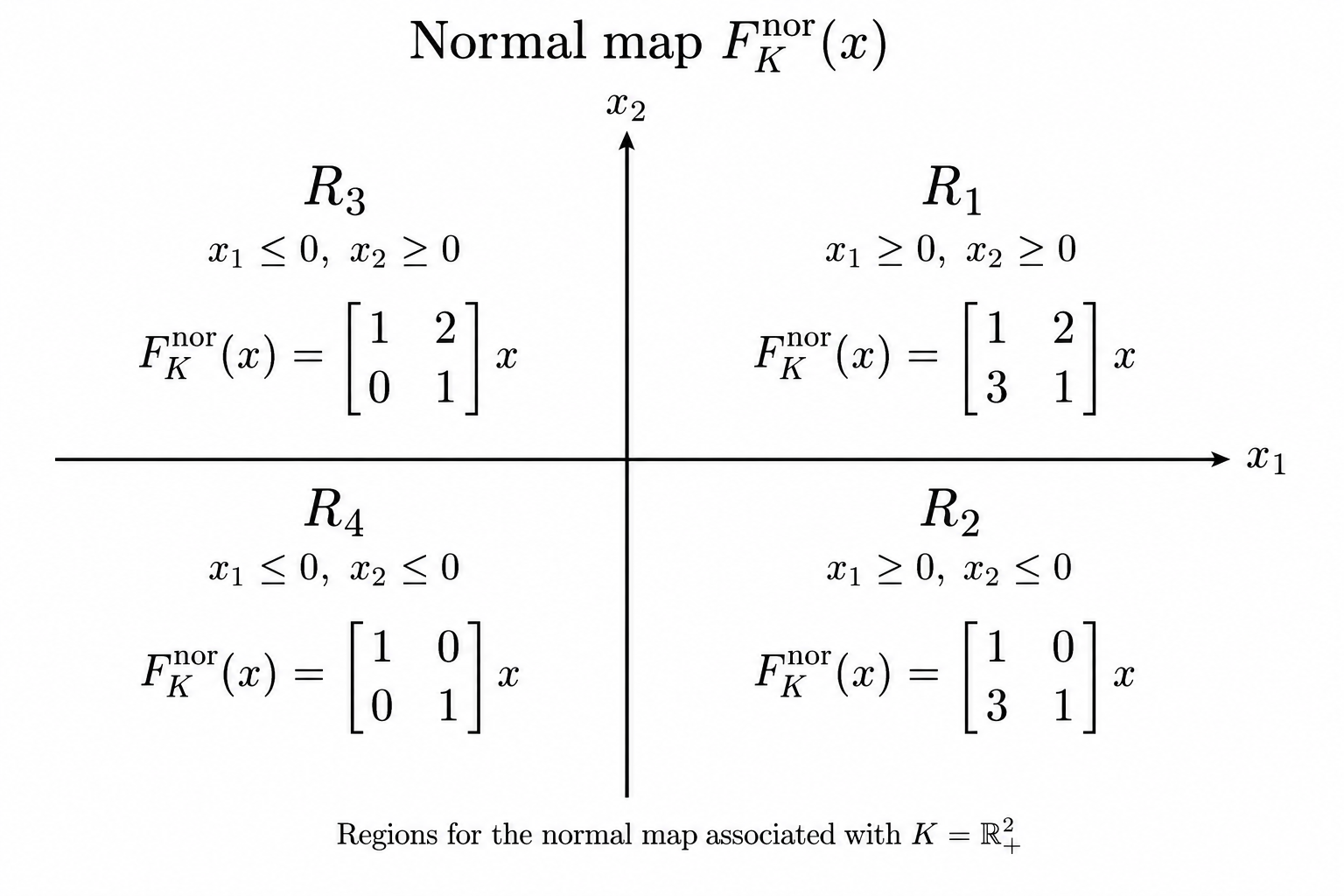}
    \caption{The normal mapping $F_K^{\rm nor}(x) =A_i x$ for $x\in R_i$, $i=1,2,3,4$,
where $A_i$ is the matrix corresponding to the linear mapping on the set $R_i$. (The plot has been generated using ChatGPT.)}
    \label{fig:example-normal}
\end{figure}

For each $i$, we have 
\[\|F_K^{\rm nor}(x)\|_2=\|A_ix\|_2\geq\sigma_{\min}(A_i)\|x\|_2.\] As long as $\sigma_{\min}(A_i)>0$ for all $i$, the mapping $F_K^{\rm nor}(\cdot)$ will be norm coercive. 
Noting that $A_4$ is the identity matrix, we have that $\sigma_{\min}(A_4)=1$. For the other cases, we have
\[\sigma_{\min}(A_1)=\sqrt{\frac{15-5\sqrt{5}}{2}}, \ 
\sigma_{\min}(A_2)=\sqrt{\frac{11 - 3\sqrt{13}}{2}},
 \ \sigma_{\min}(A_3)=\sqrt{3 - 2\sqrt{2}}.\]
Thus, for each $i$, the smallest singular value of $A_i$ is positive.

We next consider the generalized Jacobian of the normal mapping $F_K^{\rm nor}(\cdot)$ to determine if it satisfies the condition of Theorem~\ref{thm-Normal-Mapping_Sol_Existence0}.
We note that the only zero of the normal mapping $F_K^{\rm nor}(\cdot)$ is the zero vector, i.e.,
\[F_K^{\rm nor}(x)\ne 0\qquad\hbox{for all $x\ne0$}.\]
The generalized Jacobian of $F_K^{\rm nor}(\cdot)$
coincides with the Jacobian of the linear map $x\mapsto A_ix$ for $x$ in the interior of the set $R_i$, for all $i=1,2,3,4.$ Since $A_i$ is non-singular for each $i$, it follows that
\[[\nabla F_K^{\rm nor}(x)]^\top F_K^{\rm nor}(x)\ne0
\quad \hbox{for all $x\in R_i^\circ$, $i=1,2,3,4,$}\]
with $X^\circ$ denoting the interior of a set $X$. Hence,
the condition on the generalized Jacobian of Theorem~\ref{thm-Normal-Mapping_Sol_Existence0} is satisfied in the interior of the sets $R_i$. 

It remains to consider the set $\{M^\top F_K^{\rm nor}(x)\mid M\in \partial F_K^{\rm nor}(x)\}$ at the boundary points of the sets $R_i$, excluding the zero vector. 
By Definition~\ref{Def-Generalized Jacobian}, the generalized Jacobian 
$\partial F_K^{\rm nor}(x)$ at a boundary point $x$ of a set $R_i$ is given by the convex hull of the affine mappings $A_i$ associated with sets $R_i$ and $R_j$ sharing the boundary point~$x$. 
Thus, we have  
\[
\partial F_K^{\rm nor}\!(x) \!=\!
\begin{cases}
B_{12}(\alpha_1)=\begin{bmatrix}
1 \quad&& 2\alpha_1\\[1mm]
3 \quad && 1
 \end{bmatrix}
& \ \hbox{for } x\in (R_1\cap R_2)\setminus\{0\}, \alpha_1\in[0,1],\\[2mm]
B_{13}(\alpha_2)= \begin{bmatrix}
1 \quad&& 2\\[1mm]
3\alpha_2 \quad && 1
 \end{bmatrix}
& \ \hbox{for } x\in (R_1\cap R_3)\setminus\{0\}, 
\alpha_2\in[0,1],\\[2mm]
B_{24}(\alpha_3)=\begin{bmatrix}
1 \quad&& 0\\[1mm]
3\alpha_3  \quad && 1
 \end{bmatrix}
& \ \hbox{for } x\in (R_2\cap R_4)\setminus\{0\}, \alpha_3\in[0,1],\\[2mm]
B_{34}(\alpha_4)=\begin{bmatrix}
1 \quad&& 2\alpha_4 \\[1mm]
0\quad && 1
 \end{bmatrix}
& \ \hbox{for } x\in (R_3\cap R_4)\setminus\{0\}, \alpha_4\in[0,1],
\end{cases}
\]
where the subscripts $ij$ for the matrix $B$ indicate the indices of the sets $R_i$ and $R_j$ that have a common boundary.
The determinants of the matrices $B_{24}(\alpha_3)$ and $B_{34}(\alpha_4)$ are
\[\det(B_{24}(\alpha_3))=1, \ \alpha_3\in[0,1],\qquad
\det(B_{34}(\alpha_4))=1, \ \alpha_4\in[0,1].\]
Thus, they have a full rank for all $\alpha_3,\alpha_4\in[0,1]$. Since $F_K^{\rm nor}(x)\ne0$ for $x\ne0$, it follows that  
\[0\not\in\left\{M^\top F_K^{\rm nor}(x)\mid M\in \partial  F_K^{\rm nor}(x)\right\}\qquad\hbox{for $x\in (R_2\cap R_4)\cup( R_3\cap R_4)$, $x\ne0$},\]
and the condition of Theorem~\ref{thm-Normal-Mapping_Sol_Existence0} is satisfied at these points.

Finally, we consider  the points $x\in (R_1\cap R_2)\cup(R_1\cap R_3)$, with $x\ne0$, which involve the matrices $B_{12}(\alpha_1)$ and $B_{13}(\alpha_2)$. Checking the determinants of these matrices shows that they can be zero for certain values of $\alpha_1$ and $\alpha_2$, and we cannot threat them as in the previous case. In this case, we determine the whole set $\{M^\top F_K^{\rm nor}(x)\mid M\in \partial F_K^{\rm nor}(x)\}$, which we denote by 
\[(\partial\! F_K^{\rm nor}(x))^\top F_K^{\rm nor}(x):=\left\{M^\top F_K^{\rm nor}(x)\mid M\in \partial F_K^{\rm nor}(x)\right\}.\]
We start with the normal mapping, for which we have
\[F_K^{\rm nor}\!(x) \!=\!
\begin{cases}
x_1[1, 3]^\top
& \ \hbox{for $x\in (R_1\cap R_2)\setminus\{0\}=\{x\in\mathbb{R}^2\mid x_1 >0, x_2=0\}$},\\[2mm]
x_2[2, 1]^\top
& \ \hbox{for $x\in R_1\cap R_3=\{x\in\mathbb{R}^2\mid x_1=0, x_2 > 0\}$}.
\end{cases}
\]
Using the expressions for $\partial F_K^{\rm nor}\!(x)$ for such points $x$, we find that
\[(\partial F_K^{\rm nor}(x))^\top F_K^{\rm nor}\!(x) \!=\!
\begin{cases}
x_1[10, 2\alpha_1+3]^\top
&  \hbox{for $x_1 >0, x_2=0, \alpha_1\in[0,1]$},\\[2mm]
x_2[2+3\alpha_2, 5]^\top
&  \hbox{for $x_1=0, x_2 > 0, \alpha_2\in[0,1].$}
\end{cases}
\]
As we can see, the zero vector is not included in the set $(\partial F_K^{\rm nor}\!(x))^\top F_K^{\rm nor}\!(x)$, as long as $x\ne0$,
and the condition of Theorem~\ref{thm-Normal-Mapping_Sol_Existence0} is also satisfied in this case.

In conclusion, 
\[0\notin\left\{M^\top F_K^{\rm nor}(x)\mid M\in \partial F_K^{\rm nor}(x)\right\}\qquad\hbox{for all $x\ne0$},\]
showing that the generalized Jacobian condition 
of Theorem~\ref{thm-Normal-Mapping_Sol_Existence0} is satisfied for all $x\ne 0$, with $x=0$ being the unique point where $F_K^{\rm nor}\!(x)=0$.
Therefore, by Theorem~\ref{thm-Normal-Mapping_Sol_Existence0}, the VI$(K,F)$ must have a solution. \\
\noindent
{\it Uniform P-function property fails.} The mapping $F(\cdot)$ is a uniform P-function when the following relation holds for some $\mu>0$,
\[\max_{j=1,2}[F(x)-F(y)]_j\cdot [x-y]_j\geq \mu\|x-y\|_2^2\qquad \hbox{for all }x,y\in K.\]
Recalling that $F(x)=[x_1+2x_2,3x_1+x_2]^\top$, and using the fact that the mapping is linear, we see that 
\[F(x)-F(y)=\left[\begin{matrix}
x_1-y_1+2(x_2-y_2)\cr
3(x_1-y_1)+x_2-y_2
\end{matrix}\right].\]
Letting $w_i=x_i-y_i$ for $i=1,2$, 
the uniform P-function property reduces to 
\begin{equation}\label{eq-nop}
    \max\left\{w_1^2 +2w_1w_2,3w_1w_2+ w_2^2\right\}\geq \mu\|w\|_2^2\qquad \hbox{for all }w\in \mathbb{R}^2.
\end{equation}
Choosing $w_1\ne0$ and $w_2=-w_1$, the left hand side of relation~\eqref{eq-nop} is given by $\max\left\{-w_1^2,-2w_1^2\right\}$, which is negative since $w_1\ne0$. Hence, relation~\eqref{eq-nop} cannot hold, implying that
the game mapping $F(\cdot)$ is not a uniform P-function. Consequently, Proposition 3.5.10(b) of~\cite{facchinei2003finite} cannot
be used to claim the existence of a solution to VI($K,F$).\hfill $\square$
\end{example}

Next, we provide a stronger condition that implies that the condition on the generalized Jacobian of Theorem~\ref{thm-Normal-Mapping_Sol_Existence0} is satisfied. This stronger condition relies on  the notion of a generalized Jacobian of maximal rank.
\begin{definition}[Generalized Jacobian/Maximal Rank, Definition 2 \cite{clarke1976inverse}] \label{Def-fullrank Generalized Jacobian}
A generalized Jacobian
$\partial F(x)$ is said to be of maximal rank if every matrix $M$ in the definition of $\partial F(x)$ (Definition~\ref{Def-Generalized Jacobian}) has a full rank.
\end{definition}

The condition on the generalized Jacobian $\partial F^{\rm nor}_K(x)$ in Theorem~\ref{thm-Normal-Mapping_Sol_Existence0}  
is satisfied  when $\partial F^{\rm nor}_K(x)$ is of maximal rank, as seen in the following lemma.
\begin{lemma}\label{lem-ful-rank}
Let $K\subseteq \mathbb{R}^{m}$ be a nonempty closed convex set and the mapping $F:K\to\mathbb{R}^m$ be locally Lipschitz continuous on $K$.
If generalized Jacobian $\partial F^{\rm nor}_K(\bar x)$ is of maximal rank at some point $\bar x\in\mathbb{R}^m$ where $F^{\rm nor}_K(\bar x)\ne 0$, then
 \[0\notin \{M^\top F^{\rm nor}_K(\bar x)\mid M\in\partial F^{\rm nor}_K(\bar x)\}.\] 
\end{lemma}
\begin{proof}
To prove the statement, we argue by contraposition. Suppose that there is a matrix $M\in\partial F^{\rm nor}_K(\bar x)$ such that 
\[M^\top F^{\rm nor}_K(\bar x)=0.\]
Since $F^{\rm nor}_K(\bar x)\ne 0$, it follows that $F^{\rm nor}_K(\bar x)$ is an eigenvector of $M^\top$ associated with the zero eigenvalue. Therefore, $M^\top$ is rank deficient and so is $M$.
Hence, $M$ does not have full rank, implying that $\partial F^{\rm nor}_K(\bar x)$ is not of maximal rank.
\hfill$\square$
\end{proof}
}

{\color{black}
As a consequence of Theorem~\ref{thm-Normal-Mapping_Sol_Existence0} and Lemma~\ref{lem-ful-rank}, we have the following result.
 
\begin{theorem}\label{thm-Normal-Mapping_Sol_Existence}
Let $K\subseteq \mathbb{R}^{m}$ be a nonempty closed convex set, the mapping $F:K\to\mathbb{R}^m$ be locally Lipschitz continuous on $K$, and the normal mapping $F^{\rm nor}_K(\cdot)$ be norm coercive. Assume 
that the generalized Jacobian $\partial F^{\rm nor}_K(x)$ is of maximal rank at every point $x\in\mathbb{R}^m$ where $F_K^{nor}(x)\neq 0$.
Then, the set SOL$(K,F)$ is non-empty and compact.
\end{theorem}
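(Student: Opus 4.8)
The plan is to reduce the constrained problem VI$(K,F)$ to an unconstrained zero-finding problem for the normal mapping and then apply a global homeomorphism / inverse-function argument in the spirit of Theorem~\ref{Thm-main theorem-prime}. By Theorem~\ref{Thm-auxiliary-normal map solution}, it suffices to prove that the equation $F^{\rm nor}_K(v)=0$ has a solution $v\in\mathbb{R}^m$; then $x^*=\Pi_K[v]$ lies in SOL$(K,F)$. So the entire theorem is equivalent to showing that the continuous map $F^{\rm nor}_K:\mathbb{R}^m\to\mathbb{R}^m$ is surjective, or at least that $0$ is in its range.

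First I would record the regularity of $F^{\rm nor}_K(\cdot)$. Since $K$ is closed convex, the projection $\Pi_K[\cdot]$ is nonexpansive, hence globally Lipschitz; as $F$ is continuously differentiable it is locally Lipschitz, so $F^{\rm nor}_K(x)=x-\Pi_K[x]+F(\Pi_K[x])$ is locally Lipschitz on all of $\mathbb{R}^m$. This makes Theorem~\ref{Thm-Proposition 1 of Clark paper} applicable: the generalized Jacobian $\partial F^{\rm nor}_K(x)$ is a nonempty compact convex set of matrices at every point, so the maximal-rank hypothesis is meaningful and means every such matrix is nonsingular.

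Next I would invoke Clarke's inverse/implicit function machinery for locally Lipschitz maps. The key tool is that if a locally Lipschitz map has generalized Jacobian of maximal rank at a point, it is locally a Lipschitz homeomorphism near that point (Clarke's Lipschitz inverse function theorem, the companion result to Definition~\ref{Def-fullrank Generalized Jacobian} in~\cite{clarke1976inverse}). Combined with norm coercivity, this upgrades to a global statement. Concretely, I would argue that the set $\Sigma=\{x: F^{\rm nor}_K(x)=0\}$ is what we want to show nonempty, and that on the complement of the zero set the maximal-rank condition holds everywhere by hypothesis. The cleanest route is a degree-theoretic one: norm coercivity guarantees that $\|F^{\rm nor}_K(x)\|_2\to\infty$ as $\|x\|_2\to\infty$, so $F^{\rm nor}_K$ is a proper map; a proper continuous map whose Clarke Jacobian is of maximal rank wherever the map is nonzero has a well-defined, locally constant topological degree on each ball large enough to contain all preimages of $0$, and one shows this degree is nonzero, forcing $0\in\mathrm{range}(F^{\rm nor}_K)$. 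I expect the paper to mirror the proof of Theorem~\ref{Thm-main theorem-prime}, where the analogous unconstrained statement was established, replacing $F$ by $F^{\rm nor}_K$ and $\nabla F$ by $\partial F^{\rm nor}_K$.

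The main obstacle is the nonsmoothness: $F^{\rm nor}_K$ is only locally Lipschitz, not $C^1$, because $\Pi_K$ is generally nondifferentiable on the boundary-related normal cones. Thus one cannot directly use the classical determinant condition $\det(\nabla F^{\rm nor}_K(x))\neq 0$ of Theorem~\ref{Thm-main theorem-prime}; one must replace the ordinary Jacobian by the generalized Jacobian and verify that the maximal-rank property plays the role that nonsingularity of $\nabla F$ played in the smooth case. The delicate point is establishing the local-homeomorphism / degree-invariance step for a merely Lipschitz map, i.e.\ ensuring that the set of solutions of $F^{\rm nor}_K(v)=0$ is compact (which follows from coercivity) and that the homotopy or degree argument is legitimate despite the lack of differentiability. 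Once Clarke's Lipschitz inverse function theorem and the properness from coercivity are in hand, the conclusion that $0$ is attained, and hence SOL$(K,F)\neq\emptyset$, follows as in the smooth prototype.
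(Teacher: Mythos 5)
Your reduction via Theorem~\ref{Thm-auxiliary-normal map solution}, the observation that $F_K^{\rm nor}$ is locally Lipschitz (so that $\partial F_K^{\rm nor}$ and the maximal-rank hypothesis are meaningful), and the identification of Clarke's inverse function theorem plus properness-from-coercivity as the key tools all match the paper. But the decisive step of your plan --- ``one shows this degree is nonzero, forcing $0\in\mathrm{range}(F_K^{\rm nor})$'' --- is a genuine gap, not a routine verification. Properness alone never forces a nonzero degree: the map $f(x_1,x_2)=(x_1^2+x_2^2,\,x_2)$ is continuous, proper, and misses the value $(-1,0)$, so its degree at that value is zero. The only hypothesis that could rule out such behavior is the maximal-rank condition, but it is assumed only at points where $F_K^{\rm nor}\neq 0$, so there is no off-the-shelf degree theorem you can cite; making the nonzero-degree claim rigorous essentially amounts to proving the theorem by some other route. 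As written, your argument assumes its conclusion at exactly the crucial point.

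The paper closes this gap with an elementary minimization argument that your sketch does not contain: set $b=\inf_{x\in\mathbb{R}^m}\|F_K^{\rm nor}(x)\|_2$; norm coercivity makes a minimizing sequence bounded, and continuity of $F_K^{\rm nor}$ yields a point $\bar x$ with $\|F_K^{\rm nor}(\bar x)\|_2=b$; if $b>0$, the maximal-rank hypothesis applies at $\bar x$, and Clarke's inverse theorem produces a point $z$ near $\bar x$ with $F_K^{\rm nor}(z)=(1-\alpha)F_K^{\rm nor}(\bar x)$ for some $\alpha\in(0,1)$, so $\|F_K^{\rm nor}(z)\|_2=(1-\alpha)b<b$, contradicting the definition of $b$; hence $b=0$ and Theorem~\ref{Thm-auxiliary-normal map solution} finishes the proof. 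Alternatively, your own local-homeomorphism remark could be completed without any degree theory: if $F_K^{\rm nor}$ never vanished, maximal rank would hold everywhere, so the map would be a local homeomorphism with open image; properness makes the image closed, and connectedness of $\mathbb{R}^m$ then forces surjectivity, a contradiction. Either of these arguments would replace the unsupported degree assertion; you supplied the ingredients but not the mechanism that actually forces a zero of $F_K^{\rm nor}$ to exist.
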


When $F$ is continuously differentiable on $K$,} Theorem~\ref{thm-Normal-Mapping_Sol_Existence} is an alternative to Corollary~2 of \cite{NonMonotoneVI}, where we have used the natural map associated with VI$(K,F)$ to establish a sufficient condition for the existence of solutions.

\begin{remark}\label{rem-generalization}
When $K=\mathbb{R}^m$, the normal mapping $F^{\rm nor}_K(\cdot)$ coincides with the mapping $F(\cdot)$. Therefore, when $F(\cdot)$ is differentiable, we have that $\partial F^{\rm nor}_K(x)=\{\nabla F(x)\}$ at every point $x$. Thus, when $K=\mathbb{R}^m$, the conditions of 
Theorem~\ref{thm-Normal-Mapping_Sol_Existence}  are the same as the conditions of 
Theorem~\ref{Thm-main theorem-prime}, which require the mapping $F(\cdot)$ be norm coercive and 
    $\det\left(\nabla F(x)\right)\neq 0$
    for every $x\in \mathbb{R}^{m}$ where $F(x)\neq 0$. 
    Hence, Theorem~\ref{thm-Normal-Mapping_Sol_Existence} is a generalization of Theorem~\ref{Thm-main theorem-prime} to the case of a closed convex set $K\subset\mathbb{R}^m$.
    \hfill$\square$
\end{remark}

Theorem~\ref{thm-Normal-Mapping_Sol_Existence}
 provides a sufficient condition for the existence of a solution to VI$(K,F)$ assuming that 
 $F^{\rm nor}_K(\cdot)$ is norm coercive and $\partial F^{\rm nor}_K(x)$ has maximal rank at every $x$ where $F^{\rm nor}_K(x)\ne 0$. 
 In the sequel, we examine when these two  conditions are satisfied. 

 \subsection{Norm Coercivity of Normal Mapping}\label{ssec-normcoercivity}
 Here, we consider the norm coercivity of the normal mapping.
 It can be seen that, when $K$ is a compact convex set and $F(\cdot)$ is continuous on $K$, then the normal mapping $F_K^{\rm nor}(\cdot)$ is norm coercive. Next, we show that $F_K^{\rm nor}(\cdot)$ is norm coercive when $K$ has Cartesian structure, $K=K_1\times\cdots\times K_N$, and $F(\cdot)$ is a ``uniform block P-function on $K$" in the sense of ~\cite[Definition 2]{parise2019variational}\footnote{In~\cite{facchinei2003finite}, this corresponds to the notion of a uniform P-function on $K$ according to Definition 3.5.8(d).}, 
 requiring  the existence of a scalar $\mu>0$ such that for all $x,y\in K_1\times\cdots\times K_N,$
 \begin{equation}\label{eq-unip-fp}\max_{1\le j\le N}\langle[F(x)-F(y)]_j, [x-y]_j\rangle\geq \mu\|x-y\|_2^2,\end{equation}
where $[\cdot]_j$ denotes the $j$-th block of the vector.
This property is more restrictive than the uniform P-function of Definition~\ref{Def-P-function}, which applies to any set $K$.

\begin{lemma}\label{Lem-coercivity of F-K-Nor}
    Let $K=K_1\times\cdots\times K_N\subseteq\mathbb{R}^m$ with each $K_j$ being a closed convex set, and let $F: K\to \mathbb{R}^m$ satisfy the relation in~\eqref{eq-unip-fp}. 
    Also, assume that $F(\cdot)$ satisfies the following condition 
    for some $p\ge1$, $L_0\ge0$, and $L_p>0$, 
    \[\|F(x)-F(y)\|_2\le L_0+L_p\|x-y\|_2^p\qquad\hbox{for all }x,y\in K.\]
    Then, the normal mapping $F_K^{\rm nor}(\cdot)$ is norm coercive on $\mathbb{R}^m$. 
\end{lemma}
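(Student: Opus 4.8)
The plan is to argue by contradiction. Suppose $F_K^{\rm nor}(\cdot)$ is not norm coercive on $\mathbb{R}^m$; then there is a sequence $\{x^k\}\subseteq\mathbb{R}^m$ with $\|x^k\|_2\to\infty$ while $\|F_K^{\rm nor}(x^k)\|_2\le C$ for some constant $C$. Writing $z^k=\Pi_K[x^k]$ and $n^k=x^k-z^k$, the definition of the normal mapping gives $F_K^{\rm nor}(x^k)=n^k+F(z^k)$, and the triangle inequality $\|x^k\|_2\le\|z^k\|_2+\|n^k\|_2$ shows that a contradiction will follow once I establish that both $\{\|z^k\|_2\}$ and $\{\|n^k\|_2\}$ are bounded.

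First I would bound $\{\|z^k\|_2\}$, which is the crux of the argument. Fix any reference point $\bar z=(\bar z_1,\ldots,\bar z_N)\in K$. Since $F$ satisfies the block uniform P-function property~\eqref{eq-unip-fp}, for each $k$ there is a block index $j_k$ with $\langle[F(z^k)-F(\bar z)]_{j_k},[z^k-\bar z]_{j_k}\rangle\ge\mu\|z^k-\bar z\|_2^2$. The key step is to exploit the Cartesian structure: because $K=K_1\times\cdots\times K_N$, the projection decomposes blockwise, so $n^k_j=x^k_j-z^k_j$ lies in the normal cone of $K_j$ at $z^k_j$, i.e. $\langle n^k_j,y_j-z^k_j\rangle\le0$ for every $y_j\in K_j$. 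Substituting $F(z^k)=F_K^{\rm nor}(x^k)-n^k$ into the P-function inequality at block $j_k$ and using $\langle -n^k_{j_k},z^k_{j_k}-\bar z_{j_k}\rangle\le0$ (the normal-cone inequality with $y_{j_k}=\bar z_{j_k}$), the troublesome normal term drops out and, after the Cauchy-Schwarz inequality, I obtain $\mu\|z^k-\bar z\|_2^2\le(C+\|F(\bar z)\|_2)\,\|z^k-\bar z\|_2$. Hence $\|z^k-\bar z\|_2\le(C+\|F(\bar z)\|_2)/\mu$, so $\{\|z^k\|_2\}$ is bounded.

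Once $\{\|z^k\|_2\}$ is bounded, the growth condition enters: it yields $\|F(z^k)\|_2\le\|F(\bar z)\|_2+L_0+L_p\|z^k-\bar z\|_2^p$, which is bounded. Then $\|n^k\|_2=\|F_K^{\rm nor}(x^k)-F(z^k)\|_2\le C+\|F(z^k)\|_2$ is bounded as well, and $\|x^k\|_2\le\|z^k\|_2+\|n^k\|_2$ stays bounded, contradicting $\|x^k\|_2\to\infty$. This establishes $\lim_{\|x\|_2\to\infty}\|F_K^{\rm nor}(x)\|_2=+\infty$, i.e. norm coercivity on $\mathbb{R}^m$.

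The step I expect to be the main obstacle is the cancellation of the normal component $n^k$ in the P-function inequality. This works only because both the projection onto the Cartesian product $K$ and the P-function inequality are aligned blockwise, so the block $j_k$ that realizes the P-function bound is exactly a block on which the normal-cone inequality is available; without the Cartesian structure the normal term would not be sign-definite on the relevant block and this cancellation would fail. The role of the growth condition is then only the comparatively routine final bound on $\|F(z^k)\|_2$ in terms of the already-bounded quantity $\|z^k-\bar z\|_2$.
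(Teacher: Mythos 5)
Your proof is correct, and it organizes the argument differently from the paper in a way worth noting. The paper first combines the block P-function property~\eqref{eq-unip-fp} with firm nonexpansiveness of the blockwise projection (Lemma~12.1.13 of \cite{facchinei2003finite}) to derive the uniform two-point estimate~\eqref{eq-norm-coercive-1}, namely $\|F_K^{\rm nor}(x)-F_K^{\rm nor}(y)\|_2\ge\mu\|\Pi_K[x]-\Pi_K[y]\|_2$ for all $x,y\in\mathbb{R}^m$; it then runs the contradiction in the direction ``projections blow up $\Rightarrow F_K^{\rm nor}$ blows up'': the $L_0$--$L_p$ growth condition is invoked \emph{first}, to show that $\|x^k\|_2\to\infty$ with $\|F_K^{\rm nor}(x^k)\|_2$ bounded forces $\|\Pi_K[x^k]-\Pi_K[v]\|_2\to\infty$, after which \eqref{eq-norm-coercive-1} gives the contradiction. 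You argue in the opposite direction: using the blockwise normal-cone inequality at $z^k=\Pi_K[x^k]$ (valid precisely because $K$ is a Cartesian product, as you correctly flag) to cancel the normal component inside the P-function inequality at the maximizing block, you show directly that boundedness of $\|F_K^{\rm nor}(x^k)\|_2$ confines the projections $z^k$ to a ball of radius $(C+\|F(\bar z)\|_2)/\mu$ around $\bar z$, and only \emph{afterwards} use the growth condition to bound $F(z^k)$, hence the normal components $n^k$, hence $x^k$ itself. The two projection facts involved are essentially equivalent (firm nonexpansiveness specialized to a point of $K$ reduces to the normal-cone inequality), but the two organizations buy different things: your decomposition makes the role of each hypothesis transparent---the P-function alone bounds the projections, the growth condition alone bounds the normal parts---whereas the paper's route produces along the way the reusable quantitative estimate~\eqref{eq-norm-coercive-1}, which lower-bounds increments of the normal map by increments of the projections at arbitrary pairs of points.
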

\begin{proof}
By the relation in~\eqref{eq-unip-fp}, there is a $\mu>0$ such that for all $x,y\in \mathbb{R}^m$,
    \begin{equation}\label{eq-F-Nor-1-coer}
        \max_{1\le j\le N}\left\langle [F(\Pi_K[x])-F(\Pi_K[y])]_j, [\Pi_K[x]-\Pi_K[y]]_j\right\rangle\geq \mu\|\Pi_K[x]-\Pi_K[y]\|_2^2.
    \end{equation}
    Let $i$ be the index maximizing the expression on the left-hand side of~\eqref{eq-F-Nor-1-coer}.  
    Note that $[\Pi_K[x]]_{i}=\Pi_{K_i}[x_i]$, where $x_i$ is the $i$th block of $x$ and $K_i\subseteq\mathbb{R}^{m_i}$. For the projection on a closed convex set $K_i\subseteq\mathbb{R}^{m_i}$ we have (Lemma~12.1.13 a) \cite{facchinei2003finite})
    for all $x_i,y_i\in \mathbb{R}^{m_i}$,
    \[\left\|x_i-y_i-\left(\Pi_{K_i}[x_i]-\Pi_{K_i}[y_i]\right)\right\|^2_2
    +\left\|\Pi_{K_i}[x_i]-\Pi_{K_i}[y_i]\right\|^2_2\leq \|x_i-y_i\|_2^2.\]
    By expanding the first element of the preceding inequality, we obtain for all $x_i,y_i\in \mathbb{R}^{m_i}$,
    \[\left\langle x_i-y_i, \Pi_{K_i}[x_i]-\Pi_{K_i}[y_i]\right\rangle - \left\|\Pi_{K_i}[x_i]-\Pi_{K_i}[y_i]\right\|_2^2\geq 0.\]
    By combining this relation with~\eqref{eq-F-Nor-1-coer}, and recalling that the maximum in~\eqref{eq-F-Nor-1-coer} is attained on the index $i$, we have for all $x,y\in \mathbb{R}^m$,
    \[\max_{1\le j\le N}
    \left\langle [F_K^{\rm nor}(x)-F_K^{\rm nor}(y)]_j, [\Pi_K[x]-\Pi_K[y]]_j\right\rangle \geq \mu\|\Pi_K[x]-\Pi_K[y]\|_2^2.\]
    By using H\"older's inequality, we find that for all $x,y\in\mathbb{R}^{m}$,
\begin{align*}
    \|F_K^{\rm nor}(x) -F_K^{\rm nor}(y)\|_2 &\|\Pi_K[x]-\Pi_K[y]\|_2 \cr
    &\ge  \sum_{j=1}^{N}\left|
[F_K^{\rm nor}(x)-F_K^{\rm nor}(y)]_j[\Pi_K[x]-\Pi_K[y]]_j\right|\cr
&\ge \max_{1\le j\le N}
    \left\langle [F_K^{\rm nor}(x)-F_K^{\rm nor}(y)]_j, [\Pi_K[x]-\Pi_K[y]]_j\right\rangle.
\end{align*}
By combining the preceding two relations, we obtain
for all $x,y\in\mathbb{R}^{m}$, 
\begin{align}\label{eq-norm-coercive-1}
    \|F_K^{\rm nor}(x)-F_K^{\rm nor}(y)\|_2\geq \mu \|\Pi_K[x]-\Pi_K[y]\|_2.
\end{align}

To arrive at a contradiction, assume that 
 $F_K^{\rm nor}(\cdot)$ is not norm coercive. Then, there exists an unbounded sequence $\{x^k\}$ with 
 $\lim_{k\to\infty}\|x^k\|_2=+\infty$ such that the norm sequence $\{\|F_K^{\rm nor}(x^k)\|_2\}$ is bounded,
 i.e., for some $\eta>0$,
 \begin{equation}\label{eq-fbound}
     \|F_K^{\rm nor}(x^k)\|_2\le \eta\qquad\hbox{for all } k.
 \end{equation}
 Then, for an arbitrary but fixed vector $v\in K$, we have
 for all $k$,
 \begin{align}\label{eq-eqbound}
     \|F_K^{\rm nor}(x^k)- F_K^{\rm nor}(v)\|_2 &\le \|F_K^{\rm nor}(x^k)\|_2 +\|F_K^{\rm nor}(v)\|_2\cr
     &\le \eta+\|F_K^{\rm nor}(v)\|_2.
 \end{align}
 Since $F_K^{\rm nor}(x)=x-\Pi_K\left[x\right]+F(\Pi_K\left[x\right])$ by the triangle inequality, it follows that for all $k$,
 \begin{align*}
\|x^k-v\|_2 \le & \|F_K^{\rm nor}(x^k)- F_K^{\rm nor}(v)\|_2 \cr
& +\left\|\Pi_K\left[x^k\right]-\Pi_K\left[v\right] -\left(F(\Pi_K\left[x^k\right]) - F(\Pi_K\left[v\right]\right)\right\|_2\cr
\le & \eta+\|F_K^{\rm nor}(v)\|_2 \cr
&+\left\|\Pi_K\left[x^k\right]-\Pi_K\left[v\right] -\left(F(\Pi_K\left[x^k\right]) - F(\Pi_K\left[v\right]\right)\right\|_2,
 \end{align*}
 where the last inequality is obtained by using~\eqref{eq-eqbound}. By using the triangle inequality again, from the preceding relation we have 
 for all $k$,
 \begin{align*}
\|x^k-v\|_2 \le & \eta+\|F_K^{\rm nor}(v)\|_2 +\left\|\Pi_K\left[x^k\right]-\Pi_K\left[v\right]\right\|_2 +\left\|F(\Pi_K\left[x^k\right]) - F(\Pi_K\left[v\right]\right\|_2\cr
\le & \eta+ L_0+\|F_K^{\rm nor}(v)\|_2 +\left\|\Pi_K\left[x^k\right]\!-\!\Pi_K\left[v\right]\right\|_2 +L_p\!\left\|\Pi_K\left[x^k\right] \!-\!\Pi_K\left[v\right]\right\|_2^p\cr
\le &\eta+ L_0+\|F_K^{\rm nor}(v)\|_2 \cr
&+(1+L_p)\left(\left\|\Pi_K\left[x^k\right]\!-\!\Pi_K\left[v\right]\right\|_2 
+\left\|\Pi_K\left[x^k\right] \!-\!\Pi_K\left[v\right]\right\|_2^p\right),
 \end{align*}
 where the second inequality is obtained using the $L_0-L_p$ assumption on the mapping $F(\cdot)$.
 Since the vector $v\in K$ is fixed, and $\|x^k\|_2\to+\infty$, from the preceding relation we conclude that 
\begin{equation*}
    \lim_{k\to\infty}\left\|\Pi_K\left[x^k\right]-\Pi_K\left[v\right]\right\|_2 =+\infty.
\end{equation*}
The preceding relation and relation~\eqref{eq-norm-coercive-1}, with $x=x^k$ and $y=v$, yield 
\begin{equation*}
    \lim_{k\to\infty}\|F_K^{\rm nor}(x^k)-F_K^{\rm nor}(v)\|_2=+\infty.
\end{equation*}
Since $v\in K$ is a fixed vector, we conclude that 
$\lim_{k\to\infty}\|F_K^{\rm nor}(x^k)\|_2=+\infty$,
which contradicts our assumption made in~\eqref{eq-fbound} that the sequence $\{\|F_K^{\rm nor}(x^k)\|_2\}$ is bounded. Hence, we must have that 
$F_K^{\rm nor}(\cdot)$ is norm coercive.
\hfill $\square$ 
\end{proof}

The $L_0-L_p$ assumption on the mapping $F(\cdot)$ of Lemma~\ref{Lem-coercivity of F-K-Nor} is satisfied, for example, when $F(\cdot)$ is Lipschitz continuous on the set $K$ (corresponding to $L_0=0$ and $p=1$).

\begin{remark}
Lemma~4 in~\cite{2510.02724} provides an alternative condition for the norm coercivity of $F_K^{\rm nor}(\cdot)$ given that the mapping $F(\cdot)$ is norm coercive over the set $K$, where $K$ need not have a Cartesian structure.
\end{remark}

 
 \subsection{Maximal Rank Condition for Generalized Jacobian of Normal Mapping}\label{ssec-gen-jac-normal-map}
 Here, we consider the conditions ensuring maximal rank of the generalized Jacobian $\partial F^{\rm nor}_K(x)$ of the normal mapping $F^{\rm nor}_K(x)$,
 \[F_K^{\rm nor}(x)=x-\Pi_K\left[x\right]+F(\Pi_K\left[x\right])
\qquad\hbox{for all $x\in\mathbb{R}^{m}$}.\]

For the generalized Jacobian $\partial F^{\rm nor}_K(x)$, we will make use of some calculus rules given in the following lemma, where we use $\text{Conv}(X)$ to denote the convex hull of a set $X$. We also 
use the Minkowski sum of two sets $X\subseteq \mathbb{R}^m$ and $Y\subseteq \mathbb{R}^m$, defined by
$X+Y=\{x+y\mid x\in X,y\in Y\}$.

\begin{lemma}
    \label{Lem-properties-Gen-Jacob} For generalized Jacobian, the following basic operations hold:
    \begin{itemize}
        \item [(a)] If $F,G:\mathbb{R}^{m}\to\mathbb{R}^m$ are locally Lipschitz at $x\in \mathbb{R}^m$, then $\partial (F+G)(x)\subseteq \partial F(x)+\partial G(x)$. Moreover, 
         if $G:\mathbb{R}^{m}\to\mathbb{R}^m$ is differentiable at $x\in \mathbb{R}^m$, then $\partial (F+G)(x)=\partial F(x)+\nabla G(x)$.
        \item [(b)] If $F:\mathbb{R}^{m}\to\mathbb{R}^m$ is locally Lipschitz at $x$ and $G:\mathbb{R}^{m}\to\mathbb{R}^m$ is locally Lipschitz at $F(x)$, then  $\partial (G(F))(x)\subseteq\text{Conv}\{\partial G\left(F(x)\right)\partial F(x)\}$. Additionally,
        if $G:\mathbb{R}^{m}\to\mathbb{R}^m$ is differentiable at $F(x)$, then  $\partial (G(F))(x)=\nabla G(F(x))\partial F(x)$,
    \end{itemize}
    where the product $\mathcal{M}_1\mathcal{M}_2$ of two sets $\mathcal{M}_1$ and $\mathcal{M}_2$ of $m\times m$ matrices is given by
    $\mathcal{M}_1\mathcal{M}_2 =\{M_1M_2\mid M_1\in\mathcal{M}_1, \ M_2\in\mathcal{M}_2\}$.
\end{lemma}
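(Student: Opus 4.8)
The plan is to derive both rules from three ingredients: Rademacher's theorem (a locally Lipschitz map is differentiable almost everywhere, so its ordinary Jacobian exists on a set of full measure near the base point), the compactness of the generalized Jacobians supplied by Theorem~\ref{Thm-Proposition 1 of Clark paper}, and the standard fact from Clarke's calculus that the set in Definition~\ref{Def-Generalized Jacobian} is unchanged if, in forming the limits $\lim_k \nabla(\cdot)(x^k)$, one restricts the approximating points $x^k$ to lie outside any prescribed set of Lebesgue measure zero. I would state this last property once at the outset and reuse it, since it is what lets me pass from differentiability of a sum or a composition to \emph{simultaneous} differentiability of the constituent maps.

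For part (a), I would first prove the inclusion. Let $\Omega_F$ and $\Omega_G$ be the full-measure sets on which $F$ and $G$ are differentiable; on $\Omega_F\cap\Omega_G$ the ordinary Jacobian is additive, $\nabla(F+G)=\nabla F+\nabla G$. Taking any $M\in\partial(F+G)(x)$ and writing it, via Carath\'eodory, as a convex combination of matrices $M_i=\lim_k\nabla(F+G)(x^{k})$ with $x^k\to x$ chosen (using the null-set property) inside $\Omega_F\cap\Omega_G$, I would pass to a common subsequence and use compactness of $\partial F(x)$ and $\partial G(x)$ to split $M_i=\lim_k\nabla F(x^k)+\lim_k\nabla G(x^k)\in\partial F(x)+\partial G(x)$; since the latter set is convex, $M$ lies in it as well. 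For the equality, when $G$ is continuously differentiable near $x$ (as in all the applications), continuity of $\nabla G$ forces $\partial G(x)=\{\nabla G(x)\}$, so the inclusion already gives $\partial(F+G)(x)\subseteq\partial F(x)+\nabla G(x)$; the reverse inclusion follows by taking $A\in\partial F(x)$, approximating along $x^k\in\Omega_F$, and noting that $\nabla(F+G)(x^k)=\nabla F(x^k)+\nabla G(x^k)\to A_i+\nabla G(x)$ by continuity of $\nabla G$.

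For part (b), I would handle the clean equality case first, since it is the one invoked later for $F_K^{\rm nor}(\cdot)$, where the outer map is the continuously differentiable VI mapping and the inner map is the Lipschitz projection. Restricting the approximating points to $\Omega_F$, the ordinary chain rule applies at each $x^k$ because $F$ is differentiable at $x^k$ and $G$ is differentiable at $F(x^k)$, giving $\nabla(G\circ F)(x^k)=\nabla G(F(x^k))\nabla F(x^k)$. Passing to the limit, continuity of $\nabla G$ and of $F$ yields $\nabla G(F(x^k))\to\nabla G(F(x))$, while a subsequence gives $\nabla F(x^k)\to A\in\partial F(x)$; both inclusions then follow exactly as in part (a), using that left multiplication by the fixed matrix $\nabla G(F(x))$ preserves convexity of $\partial F(x)$. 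The general inclusion, with $G$ merely Lipschitz, requires the extra step of writing $\nabla(G\circ F)(x^k)=B_k\nabla F(x^k)$ for some $B_k\in\partial G(F(x^k))$ and then using upper semicontinuity of $\partial G$ at $F(x)$ together with compactness to extract $B_k\to B\in\partial G(F(x))$, placing the limit in $\mathrm{Conv}\{\partial G(F(x))\partial F(x)\}$.

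The main obstacle is precisely this last step in the general case of (b): at a point $x^k$ where the composition $G\circ F$ and the inner map $F$ are differentiable, the outer map $G$ need not be differentiable at $F(x^k)$, because $F$ may carry a positive-measure set of such $x^k$ into the (null) non-differentiability set of $G$. Establishing that $\nabla(G\circ F)(x^k)$ still factors as $B_k\nabla F(x^k)$ with $B_k\in\partial G(F(x^k))$ is the measure-theoretic heart of Clarke's chain rule; I would either invoke Clarke's result directly, or, to keep the argument self-contained, restrict attention to the equality case above, which is all that the subsequent application to the normal mapping $F_K^{\rm nor}(\cdot)$ actually requires.
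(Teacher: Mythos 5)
Your proposal takes a genuinely different route from the paper: the paper does not actually prove this lemma, it simply cites the literature (P\'ales--Zeidan for part (a), Clarke's Theorem~2.6.6 for part (b)), whereas you reconstruct the calculus rules from first principles using Rademacher's theorem, the fact that the generalized Jacobian is ``blind'' to any prescribed null set of approximating points, and the compactness of $\partial F(x)$ and $\partial G(x)$ from Theorem~\ref{Thm-Proposition 1 of Clark paper}. Granting that null-set insensitivity (itself a nontrivial but standard fact from Clarke's theory), your mechanism is sound: restrict the approximating sequences to the full-measure set where all constituent maps are simultaneously differentiable, split each limit matrix by passing to subsequences, and use convexity of the target set; the reverse inclusions via addition of, or left multiplication by, a fixed matrix are likewise correct. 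Your treatment of the general inclusion in (b) is also honest and accurate: you correctly identify that $F$ may carry the differentiability points of $G\circ F$ into the null set where $G$ is not differentiable, so the pointwise factorization $\nabla(G\circ F)(x^k)=B_k\nabla F(x^k)$ with $B_k\in\partial G(F(x^k))$ is not available, and deferring to Clarke for that case puts you in exactly the same position as the paper. What your approach buys is self-containedness for everything the paper actually uses; what the paper's citations buy is the full-strength chain-rule inclusion without reproducing Clarke's measure-theoretic argument.

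One caveat deserves emphasis: for both equality statements you quietly replace ``$G$ differentiable at $x$'' (resp.\ at $F(x)$) by ``$G$ continuously differentiable near $x$''. This is not merely a convenience---it is necessary, because under the lemma's literal hypothesis the equalities are false. Take $m=1$, $G(x)=x^2\sin(1/x)$ with $G(0)=0$, and $F=-G$: then $G$ is locally Lipschitz and differentiable everywhere with $\nabla G(0)=0$ and $\partial G(0)=[-1,1]$, so $\partial(F+G)(0)=\{0\}$ while $\partial F(0)+\nabla G(0)=[-1,1]$; similarly, taking $F$ to be the identity and the same $G$ as outer map gives $\partial(G\circ F)(0)=[-1,1]$ while $\nabla G(F(0))\,\partial F(0)=\{0\}$. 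The equalities require $\partial G$ to be a singleton at the relevant point, i.e., strict differentiability, which your $C^1$ hypothesis supplies. So your argument proves a corrected version of the lemma; since the paper only ever applies it with $G$ equal to the identity or to a continuously differentiable map, that corrected version suffices for everything downstream (Theorem~\ref{thm-general jac of Const} in particular), but you should state the strengthened hypothesis explicitly rather than as a parenthetical aside.
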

\begin{proof}
    For the statements in part (a) see \cite[ Corollary 3.5]{pales2007infinite} and \cite[ Corollary 4.5]{pales2008infinite}, respectively.
    For the statements in part (b) see \cite[Theorem~2.6.6]{clarke1990optimization} (or~\cite[Theorem~4]{imbert2002support}).
    \hfill $\square$
\end{proof}

As seen from the definition of the normal mapping $F_K^{\rm nor}(\cdot)$, assuming that $F(\cdot)$ is differentiable on $K$,
for the generalized Jacobian $\partial F_K^{\rm nor}(\cdot)$, we have by Lemma~\ref{Lem-properties-Gen-Jacob} for all $x\in\mathbb{R}^m$,
\[\partial F_K^{\rm nor}(x)\subseteq
\mathbb{I} -\partial \Pi_K\left[x\right]+\nabla F(\Pi_K\left[x\right]) \partial \Pi_K\left[x\right],\]
where $\partial \Pi_K\left[x\right]$ is not readily available at all points $x$ in general. 

We have the following lemma that gives some basic properties of the generalized Jacobian 
$\partial \Pi_K\left[\cdot\right]$ for the projection mapping.

\begin{lemma}\label{lemma-genjac-proj}
Let $K\subseteq\mathbb{R}^m$ \textcolor{black}{be a nonempty} closed convex set. 
Then, the generalized Jacobian 
$\partial \Pi_K\left[x\right]$ is nonempty compact convex set for all $x\in\mathbb{R}^m$ and we have:
\begin{itemize}
\item[(a)]
For all $x$ in the interior $K^\circ$ of the set $K$, the projection mapping
$\Pi_K\left[x\right]$ is differentiable and 
$\nabla \Pi_K\left[x\right]=\mathbb{I}$ for all $x\in K^\circ$.
\item[(b)] 
If $K^\circ\ne\emptyset$, then
$\mathbb{I}\in\partial \Pi_K\left[x\right]$ for all $x$ on the boundary {\rm bd}$K$ of the set~$K$.
\end{itemize}
\end{lemma}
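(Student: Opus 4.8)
The plan is to handle the three assertions in sequence, leaning on the nonexpansiveness of the Euclidean projection and the Clarke calculus already recorded in the excerpt. For the structural claim that $\partial\Pi_K[x]$ is a nonempty compact convex set at every $x$, I would first recall the classical fact that the projection onto a nonempty closed convex set is nonexpansive, $\|\Pi_K[x]-\Pi_K[y]\|_2\le\|x-y\|_2$ for all $x,y\in\mathbb{R}^m$. In particular $\Pi_K[\cdot]$ is locally Lipschitz at every point, so Theorem~\ref{Thm-Proposition 1 of Clark paper} immediately delivers that $\partial\Pi_K[x]$ is nonempty, compact, and convex for all $x\in\mathbb{R}^m$.

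For part (a), the key observation is that interior points are fixed points of the projection, combined with a neighborhood argument. If $x\in K^\circ$, there is a ball $B_r(x)\subseteq K$, and every $y\in B_r(x)$ satisfies $\Pi_K[y]=y$ because $y\in K$ is its own nearest point in $K$. Hence $\Pi_K[\cdot]$ coincides with the identity map on $B_r(x)$, so it is differentiable at $x$ with $\nabla\Pi_K[x]=\mathbb{I}$, which proves (a).

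For part (b), I would construct an interior approximating sequence and read off the limit of the Jacobians. Fix a boundary point $x\in\mathrm{bd}\,K$ and an interior point $x_0\in K^\circ$, which exists since $K^\circ\ne\emptyset$. By the line-segment principle for convex sets (any proper convex combination of an interior point with a point of $K$ again lies in $K^\circ$), the points $x^k:=(1-\tfrac1k)x+\tfrac1k x_0$ belong to $K^\circ$ for all $k\ge2$ and converge to $x$. By part (a), $\Pi_K[\cdot]$ is differentiable at each $x^k$ with $\nabla\Pi_K[x^k]=\mathbb{I}$, so $\lim_{k\to\infty}\nabla\Pi_K[x^k]=\mathbb{I}$. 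By Definition~\ref{Def-Generalized Jacobian}, this limit matrix is one of the matrices $M$ whose convex hull forms $\partial\Pi_K[x]$, and therefore $\mathbb{I}\in\partial\Pi_K[x]$.

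The bulk of this argument is routine once the right classical inputs are in place; the only point requiring a little care is in part (b), where I must guarantee that the approximating sequence genuinely lies in the interior — exactly what the convex line-segment principle supplies — and then correctly identify the resulting constant limit $\mathbb{I}$ as an element of the generalized Jacobian via its defining formula.
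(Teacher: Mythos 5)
Your proposal is correct and follows essentially the same route as the paper's proof: nonexpansiveness gives local Lipschitz continuity and hence the structural claim via Theorem~\ref{Thm-Proposition 1 of Clark paper}, part (a) follows because the projection is the identity on a neighborhood of any interior point, and part (b) approximates a boundary point by interior points and passes to the limit in Definition~\ref{Def-Generalized Jacobian}. The only difference is cosmetic: you make the approximating sequence explicit via the line-segment principle, where the paper simply asserts that such a sequence exists.
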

\begin{proof}
The projection mapping is Lipschitz continuous with the constant $L=1$. Thus, by Theorem~\ref{Thm-Proposition 1 of Clark paper}, the generalized Jacobian 
$\partial \Pi_K\left[x\right]$ is nonempty compact convex set for all $x\in\mathbb{R}^m$.

For part (a), we note that for any $x\in K^\circ$, there is a neighborhood of $x$ contained in the interior $K^\circ$ and such that 
$\Pi_K\left[v\right]= v$ for all $v$ in the neighborhood of $x$.
Hence, the mapping $\Pi_K\left[\cdot\right]$ is differentiable at $x$ and $\nabla \Pi_K\left[x\right]=\mathbb{I}$. 

For part (b), we note that a point $x\in{\rm bd}K$ can be approached with a sequence $\{x^k\}\subset K^\circ$ if the set $K^\circ$ is nonempty. By part (a), $\Pi_K\left[\cdot\right]$ is differentiable at $x^k\in K^\circ$ for all $k$ and $\nabla \Pi_K\left[x^k\right]=\mathbb{I}$ for all $k$. Thus, by Definition~\ref{Def-Generalized Jacobian}, it follows that $\mathbb{I}\in\partial \Pi_K\left[x\right]$.
 \hfill\hfill $\square$
\end{proof}

As seen from Lemma~\ref{lemma-genjac-proj}, not much can be said about the structure of the generalized Jacobian
$\partial \Pi_K[x]$ when $x\not\in K^\circ$, which makes Lemma~\ref{lemma-genjac-proj} uninformative when the convex set $K$ has empty interior. To deal with this,
in the following development, we will assume that, for $x\not\in K^\circ$, the generalized Jacobian
$\partial \Pi_K[x]$ is contained in the convex hull of the set $G=\left\{\mathbb{I}-e_ie_i^\top\mid i=1,\ldots,m\right\}\cup \{\mathbb{I}\}$ of $m\times m$ matrices, 
where $\{e_1,\ldots, e_m\}$ is the standard Euclidean basis. 
Later we will discuss when this assumption is valid.

Next, we state our main result showing that 
$\partial (tF)_K^{\rm nor}(\cdot)$, with $t>0$, is of maximal rank on $\mathbb{R}^{m}$, which can be used in Theorem~\ref{thm-Normal-Mapping_Sol_Existence}, combined with Theorem~\ref{Thm-auxiliary-normal map solution}, to establish the existence of solutions to VI$(K,tF)$, with $t>0$. By noting that VI$(K,F)$ and VI$(K,\tau F)$ have the same solution set for any $\tau>0$, the result is applicable to investigation of solutions to 
VI$(K,F)$.
\begin{theorem}\label{thm-general jac of Const}
    Let $K\subseteq \mathbb{R}^{m}$ be a closed convex set and $F:K\to \mathbb{R}^{m}$ be a continuously differentiable mapping. 
    Assume that the Jacobian $\nabla F(\cdot)$ is full rank on the set $K$ 
    and the smallest singular value $\sigma_{min}\left(\nabla F(x)\right)$ is uniformly lower bounded by a positive scalar for all boundary points $x$ of $K$. 
    Also, assume that all $(m-1)\times (m-1)$ principal minors of $\nabla F(x)$ are non-zero for every boundary point $x$ of $K$. 
    Moreover, assume that for all $x$ that do not belong to the interior $K^\circ$ of the set $K$, 
    $\partial \Pi_K[x]\subseteq \text{Conv}(G)$, where $G=\left\{\mathbb{I}-e_ie_i^\top\mid i=1,\ldots,m\right\}\cup \{\mathbb{I}\}$.
    Then, there is $t>0$ such that $\partial (tF)_K^{\rm nor}(x)$ is of maximal rank for all $x\in\mathbb{R}^{m}$.
\end{theorem}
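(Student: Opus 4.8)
The plan is to reduce the maximal-rank requirement to a statement about ordinary determinants of matrices built from $\nabla F$ and the projection's generalized Jacobian, and then to exploit the diagonal structure of $\mathrm{Conv}(G)$. First I would apply the calculus rules of Lemma~\ref{Lem-properties-Gen-Jacob} to the identity $(tF)^{\rm nor}_K(x)=x-\Pi_K[x]+tF(\Pi_K[x])$, which yields the containment
\[\partial (tF)^{\rm nor}_K(x)\subseteq \mathbb{I}+\bigl(t\nabla F(\Pi_K[x])-\mathbb{I}\bigr)\,\partial\Pi_K[x].\]
By Definition~\ref{Def-fullrank Generalized Jacobian} it then suffices to show that every matrix $M=\mathbb{I}+(tJ-\mathbb{I})P$ is nonsingular, where $J=\nabla F(\Pi_K[x])$ and $P\in\partial\Pi_K[x]$.

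Next I would split on the location of $x$. If $x\in K^\circ$, Lemma~\ref{lemma-genjac-proj}(a) gives $P=\mathbb{I}$, so $M=t\nabla F(x)$, which is nonsingular for every $t>0$ since $\nabla F$ has full rank on $K$. If $x\notin K^\circ$, then $y:=\Pi_K[x]\in\mathrm{bd}\,K$, so $J=\nabla F(y)$ obeys the uniform lower bound $\sigma_{\min}(J)\ge c>0$ and has nonzero $(m-1)\times(m-1)$ principal minors, while $P\in\mathrm{Conv}(G)$. I would record that every $P\in\mathrm{Conv}(G)$ is diagonal, $P=\mathrm{diag}(d_1,\dots,d_m)$ with $d_j\in[0,1]$ and $\sum_{j}(1-d_j)\le 1$, so that column $j$ of $M$ equals $(1-d_j)e_j+t\,d_j\,J_{:j}$. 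Expanding the determinant multilinearly gives
\[\det M=\sum_{T\subseteq\{1,\dots,m\}} t^{|T|}\Bigl(\prod_{j\in T}d_j\Bigr)\Bigl(\prod_{j\notin T}(1-d_j)\Bigr)\det(J_{TT}),\]
where $J_{TT}$ is the principal submatrix on the index set $T$. The simplex constraint then forces a useful dichotomy: at most one coordinate $d_j$ can vanish, and if some $d_i=0$ then all other $d_j=1$.

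With this expansion in hand I would treat two regimes. In the degenerate regime where some $d_i=0$, the dichotomy collapses $M$ to the matrix $tJ$ with its $i$th column replaced by $e_i$, whose determinant is $t^{m-1}$ times the $(m-1)$ principal minor of $J$ omitting index $i$; this is nonzero by hypothesis for every $t>0$. In the nondegenerate regime all $d_j>0$, so $P$ is invertible and I would factor $\det M=\bigl(\prod_j d_j\bigr)\det(D+tJ)$ with $D=\mathrm{diag}\bigl((1-d_j)/d_j\bigr)\succeq0$. If $(D+tJ)u=0$ for some $u\neq0$, then $\|Du\|=t\|Ju\|\ge tc\|u\|$, so $\|D\|\ge tc$; choosing $t$ large enough that $tc$ exceeds the bound on $\|D\|$ available when $P$ stays away from the faces of the simplex excludes a null vector, while configurations with $\|D\|$ large (some $d_{j_0}\to0$) are pushed by the simplex constraint toward the degenerate regime and are controlled by the same nonvanishing $(m-1)$-minor through a cofactor (matrix-determinant-lemma) expansion.

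The step I expect to be the main obstacle is precisely the selection of a single $t>0$ valid uniformly over all boundary points $y$ and all $P\in\mathrm{Conv}(G)$: since the intermediate-order principal minors $\det(J_{TT})$ may differ in sign, the signed sum above can a priori vanish, and it is the interplay of the uniform lower bound on $\sigma_{\min}(\nabla F(y))$, the nonvanishing of the $(m-1)$ principal minors, and the ``total deficiency at most one'' structure of $\mathrm{Conv}(G)$ that must be balanced to preclude this. Concretely, I would fix a threshold separating the near-face configurations (handled by the $(m-1)$-minor term, which dominates as $d_{j_0}\to0$) from the interior-of-simplex configurations (handled by the singular-value estimate with $t$ above the threshold), and then check that the two admissible ranges of $t$ overlap. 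Feeding the resulting maximal-rank conclusion into Theorem~\ref{thm-Normal-Mapping_Sol_Existence}, together with the observation that VI$(K,F)$ and VI$(K,tF)$ share the same solution set for $t>0$, would complete the intended application.
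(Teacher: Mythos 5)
Your reduction is essentially the paper's: the same calculus rules from Lemma~\ref{Lem-properties-Gen-Jacob} give $\partial(tF)^{\rm nor}_K(x)\subseteq\mathbb{I}+\bigl(t\nabla F(\Pi_K[x])-\mathbb{I}\bigr)\partial\Pi_K[x]$, the same description of $\mathrm{Conv}(G)$ as diagonal matrices $\mathrm{diag}(d_1,\dots,d_m)$ with $d_j\in[0,1]$ and $\sum_j(1-d_j)\le 1$, and the same split into a degenerate case (some $d_i=0$, forcing $d_j=1$ for $j\ne i$, handled by the nonzero $(m-1)\times(m-1)$ principal minors) and a nondegenerate case. Your multilinear determinant expansion and the simplex dichotomy are correct. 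The genuine gap is exactly the step you flag as the main obstacle: the intermediate regime where one $d_{j_0}$ is small but positive, of order $1/t$, and the rest are near $1$. Your singular-value argument only excludes singularity when all $d_j>1/(1+tc)$, while the $(m-1)$-minor term dominates only below a $t$-dependent threshold $d_{j_0}\lesssim \mathrm{const}/t$; the proposed ``check that the two admissible ranges of $t$ overlap'' is precisely what cannot be done, because the ranges need not overlap.

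Concretely, take $m=2$ and $J=\begin{pmatrix}1&2\\3&1\end{pmatrix}$ (the Jacobian of the paper's Example~\ref{example-vi}): it has full rank, $\sigma_{\min}(J)$ is a positive constant, and both $1\times1$ principal minors equal $1$, so every hypothesis on $\nabla F$ holds. With $d_2=1$ and $d_1\in(0,1)$, which is admissible in $\mathrm{Conv}(G)$ since $\sum_j(1-d_j)=1-d_1\le1$, your expansion gives
\[
\det M_t \;=\; t(1-d_1)\,J_{22}+t^2d_1\det J\;=\;t(1-d_1)-5t^2d_1,
\]
which vanishes at $d_1=1/(1+5t)$ for \emph{every} $t>0$: the sign disagreement between the $(m-1)$-minor ($+1$) and $\det J$ ($-5$) produces exact cancellation in the uncovered regime, so no threshold-matching can succeed without additional sign hypotheses on the intermediate principal minors (P-matrix-type conditions the theorem is designed to avoid). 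Moreover, this is not merely a failure of the method: with $K=\{x\in\mathbb{R}^2:x_1\ge0\}$ (for which $\partial\Pi_K[x]\subseteq\mathrm{Conv}(G)$ for all $x\notin K^\circ$) and $F(x)=Jx$, the generalized Jacobian of $(tF)^{\rm nor}_K$ at a boundary point is the convex hull of $\begin{pmatrix}1&2t\\0&t\end{pmatrix}$ and $tJ$, which contains the singular matrix above for every $t>0$; so the conclusion itself fails under the convex-hull reading of ``maximal rank'' that the Clarke inverse function theorem, and hence Theorem~\ref{thm-Normal-Mapping_Sol_Existence}, requires. You should also know that the paper's own proof breaks at the same spot: its Case-1 argument divides by $t$ and appeals to invertibility of the limit $\nabla F(\Pi_K[x])\bigl(\mathbb{I}-\beta\sum_i\alpha_ie_ie_i^\top\bigr)$, but that limit's determinant, $\det(\nabla F)\prod_i(1-\beta\alpha_i)$, is not uniformly bounded away from zero as $\beta\alpha_{j_0}\to1$, so ``sufficiently large $t$'' cannot be chosen uniformly over $\mathrm{Conv}(G)$; the same $2\times2$ example defeats it. In short: your expansion is right, your diagnosis of the obstacle is right, but the obstacle is fatal to this proof strategy, and to the statement as written.
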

\begin{proof}
    Let $t>0$ be arbitrary, and consider the mapping $tF$.
By our assumption, the mapping $tF$ is continuously differentiable on the set $K$, while by Lemma~\ref{lemma-genjac-proj}, $\Pi_K\left[\cdot\right]$ is differentiable at $x\in K^\circ$, so we have
\[\partial (tF)_K^{\rm nor}(x) = \{t\nabla F(\Pi_K\left[x\right])\}\qquad\hbox{for all }x\in K^\circ.\]
By our assumption, the Jacobian $\nabla F(\cdot)$ is full rank on the set $K$ and, therefore, $\partial (tF)_K^{\rm nor}(x)$ also has a full rank for $x\in K^\circ$ and any $t>0$.

Next, we consider $x\not\in K^\circ$, in which case $\Pi_K[x]$ lies on the boundary of the set $K$.
Let us distinguish between the identity operator $I(\cdot)$ and the identity matrix $\mathbb{I}$. 
Thus, we can write
\[(tF)_K^{\rm nor}(x)=I(x)+(tF-I)(\Pi_K[x]).\]
Note that $(tF-I)(\cdot)$ is a continuously differentiable mapping, whose Jacobian is given by $\nabla (tF-I)(x) = t\nabla F(x)-\mathbb{I}$ for all $x$, where we use the fact that
the Jacobian of the identity operator is equal to the identity matrix $\mathbb{I}$ at any point $x$. Hence, by Lemma~\ref{Lem-properties-Gen-Jacob} we have
\[\partial (tF)_K^{\rm nor}(x)
 \subseteq
\mathbb{I} +\left(t\nabla F(\Pi_K\left[x\right]) -\mathbb{I}\right)\partial \Pi_K[x].\]
By our assumption, for $x\not\in K^\circ$, the generalized Jacobian 
$\partial \Pi_K\left[x\right]$ is contained in the convex hull of the matrix set $G$, so we have
\begin{equation}\label{eq-tfnor}
    \partial (tF)_K^{\rm nor}(x)
 \subseteq
\mathbb{I} +\left(t\nabla F(\Pi_K\left[x\right]) -\mathbb{I}\right){\rm Conv}(G),
\end{equation}
where $G=\left\{\mathbb{I}-e_ie_i^\top\mid i=1,\ldots,m\right\}\cup \{\mathbb{I}\}$.
Any element $M$ in the 
convex hull of $G$, can be written as
\begin{equation}\label{eq-convhullg}
    M=(1-\beta)I +\beta Q, \quad\hbox{for some } \beta\in[0,1],
\end{equation}
\[Q=\mathbb{I}-\sum_{i=1}^m \alpha_ie_ie_i^\top
\qquad \sum_{i=1}^m\alpha_i=1, \  \alpha_i\ge0 \hbox{ for all } i=1,\ldots,m.\]
Combining this with equation~\eqref{eq-convhullg}, we have
\begin{align*}
{\rm Conv}(G)=\left\{\mathbb{I}-\beta \sum_{i=1}^{m}\alpha_i e_i e_i^\top\mid \beta\in[0,1],\sum_{i=1}^m\alpha_i=1,\alpha_i\geq 0 \hbox{ for all } i=1,\ldots,m\right\}.\end{align*}
We now use the preceding characterization of ${\rm Conv}(G)$
in order to simplify the right-hand side of the inclusion
for $\partial (tF)_K^{\rm nor}(x)$ in relation~\eqref{eq-tfnor}. For any matrix $\mathbb{I}-\beta \sum_{i=1}^{m}\alpha_ie_ie_i^\top\in {\rm Conv}(G)$, by simple matrix algebra, we have
\begin{align*}
    \mathbb{I} +\left(t\nabla F(\Pi_K\left[x\right]) -\mathbb{I}\right)\left(\mathbb{I}-\beta\sum_{i=1}^{m}\alpha_ie_ie_i^\top\right)
    = &
\beta\sum_{i=1}^{m}\alpha_ie_ie_i^\top \cr
&+ t\nabla F(\Pi_K\left[x\right])\left(\mathbb{I}-\beta\sum_{i=1}^{m}\alpha_ie_ie_i^\top\right).
\end{align*}
Thus, from relation~\eqref{eq-tfnor}, we see that
 $\partial (tF)_K^{\rm nor}(x)$ is contained in the set of matrices that have the form 
 \begin{equation}\label{eq-main-comp}
 \beta\sum_{i=1}^{m}\alpha_ie_ie_i^\top 
+ t\nabla F(\Pi_K\left[x\right])\left(\mathbb{I}-\beta\sum_{i=1}^{m}\alpha_ie_ie_i^\top\right),
\end{equation}
where $\beta\in[0,1]$, $\sum_{i=1}^m\alpha_i=1,$ with $\alpha_i\geq 0$ for all $i$.

It remains to show that
the matrices of the form~\eqref{eq-main-comp} are of the full rank. If $\beta=0$, then the matrix of the form~\eqref{eq-main-comp} reduces to $t\nabla F(\Pi_K\left[x\right])$, which has a full rank for any $t>0$ 
due to our assumption that the Jacobian $\nabla F(\cdot)$ has a full rank on the set $K$.

We now consider separately two cases, the case when $\alpha_i<1$
for all $i$ and $\beta\in(0,1]$, or $\alpha_i=1$ for some $i$ and $\beta\in(0,1)$, and the case when $\alpha_i=1$ for some $i$ and $\beta=1$.
These cases correspond to having the matrix 
$\mathbb{I}-\beta\sum_{i=1}^{m}\alpha_ie_ie_i^\top$ in~\eqref{eq-main-comp} invertible or not.

{\it Case $\alpha_i<1$ for all $i$ and $\beta\in(0,1]$, or $\alpha_i=1$ for some $i$ and $\beta\in(0,1)$.} 
Recalling that the vectors $e_1,\ldots,e_m$ are orthonormal, we have that the eigenvalues of 
$\sum_{i=1}^{m}\alpha_ie_ie_i^\top$ are $\alpha_1,\ldots,\alpha_m$. Hence, the eigenvalues of 
$\mathbb{I}-\beta\sum_{i=1}^{m}\alpha_ie_ie_i^\top$ are
$1-\beta\alpha_i$ for $i=1,\ldots,m$.
Note that $1-\beta\alpha_i>0$ for all $i$ when
$\alpha_i\in[0,1)$ for all $i$ and $\beta\in(0,1]$. Thus, the matrix $\mathbb{I}-\beta\sum_{i=1}^{m}\alpha_ie_ie_i^\top$ in~\eqref{eq-main-comp} is invertible.
When  $\alpha_i=1$ for some $i$, then we must have $\alpha_j=0$ for all $j\ne i$, since $\sum_{i=1}^m\alpha_i=1$. Thus, for $\beta\in(0,1)$, we have 
$1-\beta\alpha_i>0$ and $1-\beta\alpha_j=1$ for all $j\ne i$. Hence, again, the matrix $\mathbb{I}-\beta\sum_{i=1}^{m}\alpha_ie_ie_i^\top$ in~\eqref{eq-main-comp} is invertible.

Scaling the matrix in~\eqref{eq-main-comp}  by $t$ and letting $t\to\infty$, in the limit we obtain the matrix 
 \begin{equation*}
 \nabla F(\Pi_K\left[x\right])\left(\mathbb{I}-\beta\sum_{i=1}^{m}\alpha_ie_ie_i^\top\right).
\end{equation*}
This matrix is invertible since $\mathbb{I}-\beta\sum_{i=1}^{m}\alpha_ie_ie_i^\top$ is invertible and the Jacobian $\nabla F(\cdot)$ has a full rank on the set $K$ (by our assumption). By continuity of the determinant and $\sigma_{\min}\left(\nabla F(\Pi_K[x])\right)$ being bounded away from zero, we conclude that for sufficiently large $t$  and for every $x\notin K^\circ$, the determinant of the matrix in~\eqref{eq-main-comp}  is non-zero; hence, the matrix is of full rank. Therefore, 
the generalized Jacobian $\partial (tF)_K^{\rm nor}(x)$ is of maximal rank.

{\it Case $\alpha_{i}=1$ for some $i\in [m]$ and $\beta=1$.} 
In this case, $\alpha_j=0$ for all $j\ne i$ and the matrix in~\eqref{eq-main-comp} takes on the following form
\begin{equation}\label{eq-main-comp-2}
    e_{i}e_{i}^\top 
+ t\nabla F(\Pi_K\left[x\right])\left(\mathbb{I}-e_{i}e_{i}^\top\right).
\end{equation}
The ${i}$-th column of the matrix in~\eqref{eq-main-comp-2} is given by the vector $e_i$, and the matrix has the following form:
\[e_{i}e_{i}^\top 
+ t\nabla F(\Pi_K\left[x\right])\left(\mathbb{I}-e_{i}e_{i}^\top\right)
=\begin{bmatrix}
[M_t(x)]_{11} & \;{\bf 0}\;      & [M_t(x)]_{13} \\
[M_t(x)]_{21} & \;1\;      & [M_t(x)]_{23} \\
[M_t(x)]_{31} & \;\bf{0}\;      & [M_t(x)]_{33}
\end{bmatrix},
\]
where, $M_t(x)=t\nabla F(\Pi_K[x])$, and $[M_t(x)]_{pq}$ denotes $pq$-block  decomposition of the matrix $M_t(x)$, and ${\bf 0}$ is the zero vector of an appropriate dimension.
The determinant of the matrix in the preceding relation can be computed by expanding the determinant with respect to the $i$th column. 
In this case, 
\[\det\left(e_{i}e_{i}^\top 
+ t\nabla F(\Pi_K\left[x\right])\left(\mathbb{I}-e_{i}e_{i}^\top\right)\right)=1\cdot \det\left(\begin{bmatrix}
[M_t(x)]_{11}  & [M_t(x)]_{13} \\
[M_t(x)]_{31}  & [M_t(x)]_{33}
\end{bmatrix}\right).\]
By our assumption, all $(m-1)\times (m-1)$ principal minors of $\nabla F(\Pi_K[x])$ are non-zero for all $x\notin K^\circ$, thus implying that the determinant on the right hand side of the preceding relation is non-zero. 
\hfill $\square$
\end{proof}

The condition 
$\partial \Pi_K[x]\subseteq \text{Conv}(G)$, with $G=\left\{\mathbb{I}-e_ie_i^\top\mid i=1,\ldots,m\right\}\cup \{\mathbb{I}\}$, of Theorem~\ref{thm-general jac of Const} is satisfied, for example,
when $K$ is the  Cartesian product of one-dimensional closed convex  sets, i.e., $K=K_1\times\cdots\times K_m,$ with  $K_i\subseteq\mathbb{R}$ being closed and convex set for all $i$. For this special Cartesian product structure of the set $K$, when the mapping $F$ is continuously differentiable
and satisfies the uniform P-function property, the VI$(K,F)$
has a unique solution, as discussed next.
Figure~\ref{fig:relation-2} depicts the implications of the uniform $P$-function property for the Jacobian $\nabla F(\cdot)$.

When the conditions of Lemma~\ref{Lem-coercivity of F-K-Nor} are satisfied,
the normal mapping $F_K^{\rm nor}(\cdot)$ is norm coercive. 
When $F(\cdot)$ is a uniform P-function on $K$, by Theorem~\ref{thm-P-function},
the Jacobian $\nabla F(x)$ has a full rank for all $x\in K$ and the smallest singular values $\sigma_{\min}(\nabla F(\cdot))$ of the Jacobian are bounded away from zero uniformly on the boundary of the set $K$. Also, by Theorem~\ref{thm-P-function}, all $(m-1)\times(m-1)$ principal minors are nonzero on the boundary points of $K$. 
Thus, all the conditions in Theorem~\ref{thm-general jac of Const} imposed on the Jacobian $\nabla F(\cdot)$ are satisfied.
Thus, if $K=K_1\times\cdots\times K_m$ and $F(\cdot)$ is a uniform P-function on $K$, all the conditions of Theorem~\ref{thm-general jac of Const} are satisfied.
Thus, by Theorem~\ref{thm-general jac of Const}, there is $t>0$ such that $\partial (tF)_K^{\rm nor}(x)$ is of maximal rank for all $x\in\mathbb{R}^{m}$.
Hence, when $F^{\rm nor}_K(\cdot)$ is norm coercive, all other conditions of Theorem~\ref{thm-Normal-Mapping_Sol_Existence} are satisfied, implying that
the VI$(K,tF)$ has a solution for some $t>0$. 
As the solution sets of VI$(K,t F)$ and VI$(K,F)$ coincide, we conclude that VI$(K,F)$ has a solution. The uniqueness of the solution follow by Proposition 3.5.10(a) in~\cite{facchinei2003finite}, which states that VI$(K,F)$ has at most one solution. However,  when $K=K_1\times\cdots\times K_m$ and $F(\cdot)$ is a uniform P-function on $K$, the same result can be obtained directly from Proposition~3.5.10(b) of~\cite{facchinei2003finite}. 

\begin{figure}[h!]
\centering
  \includegraphics[width=0.85\linewidth]{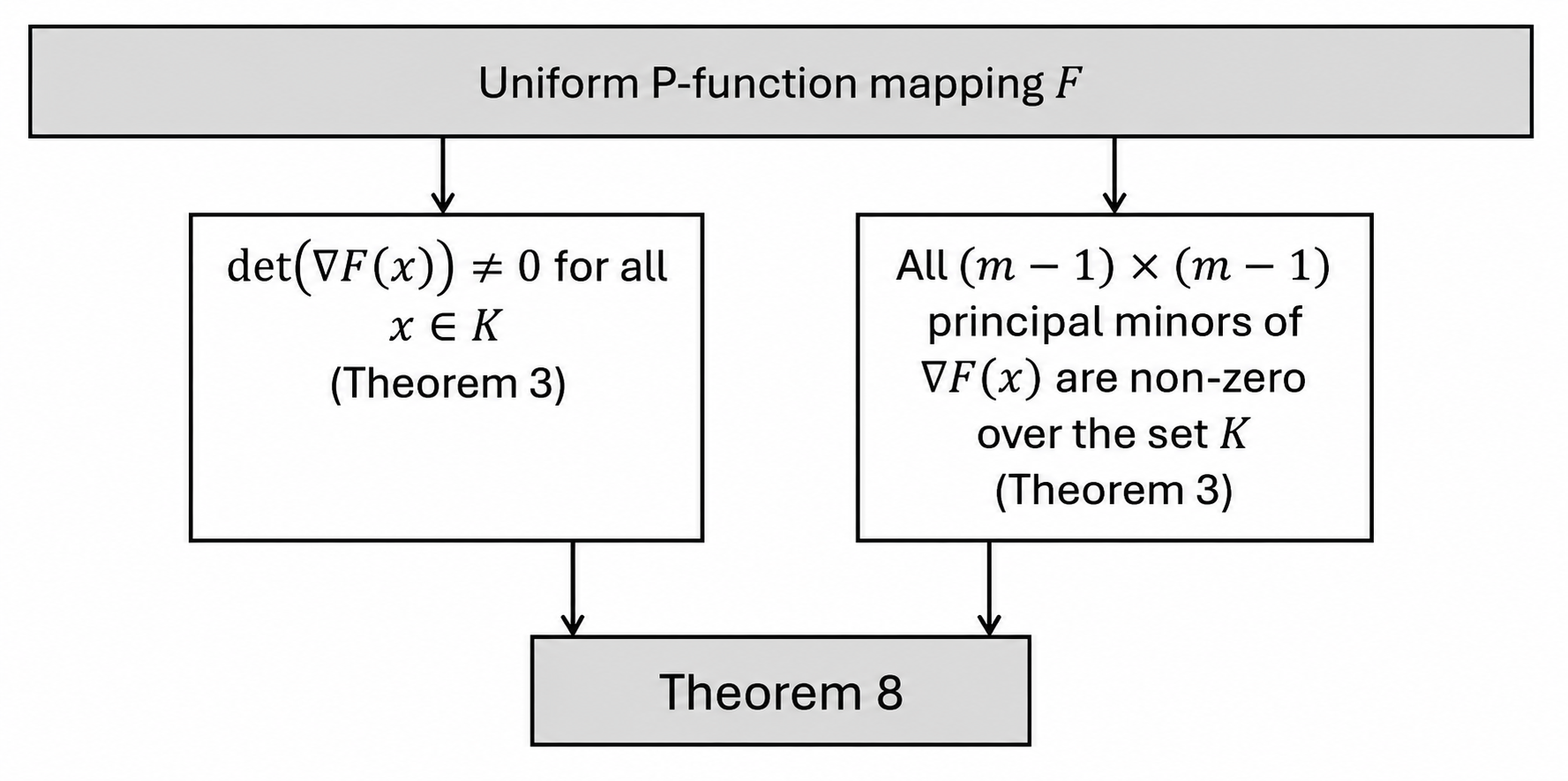}
  \caption{Implications of the uniform P-function property for the conditions on Jacobian used in Theorem~\ref{thm-general jac of Const}.}
  \label{fig:relation-2}
\end{figure}

\section{Implications for {\color{black}Variational Inequalities} Arising from Games} \label{Sec-Game}
In this section, we consider a VI arising from a game and analyze
the existence of a (quasi)-Nash equilibrium through the main result of Section~\ref{Sec-Main result-VI}.
Specifically, we consider a game with $N$ players, indexed by $i=1,2,\ldots,N$. The set  $\mathcal{N}=\{1,2,\ldots,N\}$ denotes the collection of all players. The player $i$ decisions are in the set $K_i\subseteq \mathbb{R}^{n_i}$. In a game each player $i$ wants to minimize its cost function $f_i(x_i,x_{-i})$, where $x_i$ is the decision variable of player $i$ and $x_{-i}$ is the concatenation of all other players' decisions $x_j$, i.e., $x_{-i}=(x_1,\ldots,x_{i-1},x_{i+1},\ldots,x_N)$ for all players $i$.
Thus, the player $i$ is confronted with the following minimization problem:
\begin{eqnarray}\label{Eq-game}
    &&\hbox{minimize \ \ \ } f_i(x_i,x_{-i}) \nonumber\\ 
    && \hbox{subject to\ \ \ } x_i \in K_i, 
\end{eqnarray}
where $f_i:K \rightarrow \mathbb{R}$ and $K=K_1\times\cdots\times K_N$. The 
cost function $f_i$ is assumed to be known only by player $i$. 
A Nash equilibrium is a solution concept for the non-cooperative game in~\eqref{Eq-game}.
\begin{definition}[Nash Equilibrium]\label{def-Nash}
    A tuple $x^*=(x_1^*,\ldots, x_N^*)$, with $x_i^*\in K_i$ for all $i=1,\ldots,N,$ is a Nash equilibrium of the game in~\eqref{Eq-game} if 
    \[f_i(x_i,x_{-i}^*)\geq f_i(x^*_i,x_{-i}^*)\qquad\hbox{for all }x_i\in K_i, \ \ i=1,2,\ldots,N.\]
\end{definition}

When the game is convex, as defined by Rosen \cite{rosen1965existence}, i.e., the sets $K_i$ are convex and closed, and each $f_i(\cdot,x_{-i})$ is convex and continuously differentiable over $K_i$ for every $x_{-i}\in K_{-i}$, then by the first-order optimality principle, we have that a tuple $x^*=(x_1^*,\ldots, x_N^*)$, with $x_i^*\in K_i$ for all $i$, is a Nash equilibrium for the game in~\eqref{Eq-game} if and only if 
\[\nabla_i f_i(x^*_i,x_{-i}^*)(x_i-x_i^*)\ge 0 \qquad\hbox{for all $x_i\in K_i$ and for all $i=1,2,\ldots,N$},\]
where $\nabla_i f_i(x_i,x_{-i})$ denotes the partial derivative $\nabla_{x_i} f_i(x_i,x_{-i})$. For a convex game, 
Proposition~1.4.2 of~\cite{facchinei2003finite} connects a Nash equilibrium to a solution of a variational inequality problem: a tuple $x^*=(x_1^*,\ldots, x_N^*)$, with $x_i^*\in K_i$ for all $i$, is a Nash equilibrium for the game in~\eqref{Eq-game} if and only if $x^*\in K$ is a solution to VI$(K,F)$, where $K=K_1\times\cdots\times K_N$ and $F(x)$ is the \textcolor{black}{ associated game mapping}, given by $F(x)=[\nabla_1 f_1(x_1,x_{-1}),\ldots,\nabla_N f_N(x_N,x_{-N})]$.

When the sets $K_i$ are convex and closed, and each $f_i(x_i,x_{-i})$ is continuously differentiable in $x_i$ over $K_i$ for every $x_{-i}\in K_{-i}$, but not necessarily convex in $x_i$, 
one can still consider VI$(K,F)$, with $K=K_1\times\cdots\times K_N$ and the \textcolor{black}{ associated game mapping $F(\cdot)$}. 
In this case, VI$(K,F)$ may have a solution, but the solution need not be a Nash equilibrium due to the lack of convexity structure in players' cost functions. To differentiate this case, a quasi-Nash equilibrium concept has been introduced. We use the definition 
that extends \cite[Definition 2.13]{C2} from a two player to a multiple player game.

\begin{definition}[Quasi-Nash Equilibrium]\label{def-quasi-Nash} Let $K_i\subseteq\mathbb{R}^{n_i}$ be a closed convex set for all $i=1,\ldots,N$.
    A tuple $x^*=(x_1^*,\ldots, x_N^*)$,  with $x_i^*\in K_i$ for all $i=1,\ldots,N,$ is a quasi-Nash equilibrium of the game in~\eqref{Eq-game} if 
    \[\langle x-x^*,F(x^*)\rangle\geq 0 \qquad \hbox{for all } x\in K_1\times\cdots\times K_N,\]
    where $F(x)=[\nabla_1 f_1(x_1,x_{-1}),\ldots,\nabla_N f_N(x_N,x_{-N})]$.
\end{definition}
The definition of a quasi-Nash equilibrium
requires that the first-order stationary conditions are satisfied for each player's minimization 
problem in~\eqref{Eq-game}. Thus, a quasi-Nash equilibrium need not be a Nash equilibrium, while a Nash equilibrium is always a quasi-Nash equilibrium.

In what follows, we let $K$ be the Cartesian product of the players' action sets $K_i\subseteq \mathbb{R}^{n_i}$, i.e.,
\[K=K_1\times\cdots\times K_N,\]
and we let $\bar{n}=\sum_{i=1}^N n_i$. The mapping $F:K\to\mathbb{R}^{\bar n}$ has coordinate mappings given by $F_i(x)=\nabla_i f(x_i,x_{-i})$.
In such a VI($K,F)$ arising from a game, the set $K$ is a Cartesian product of the players action sets, in which case properties of the mapping $F(\cdot)$ such as P-function \cite[Proposition~3.5.10]{facchinei2003finite}
or of the Jacobian $\nabla F(\cdot)$ are often used to assert the existence of a Nash equilibrium (such as 
uniform P-matrix and ${\rm P}_\Upsilon$-matrix conditions in~\cite{parise2019variational}). 
As these conditions deal with the Jacobian $\nabla F(\cdot)$, we will assume that the players' cost functions are twice continuously differentiable over the set $K$. Also, we assume that each set $K_i$ is closed and convex. 

\begin{assumption}\label{Assum-Suff Cond- part1}
For all players $i=1,2,\ldots,N$, the set $K_i$ is closed and convex, and
the function $f_i(x)$ is twice continuously differentiable over the set $K$.
\end{assumption}

In the next two sections, we explore the uniform P-matrix and ${\rm P}_\Upsilon$-matrix conditions, respectively,  and relate them to the results of Section~\ref{Sec-Main result-VI}.
\subsection{Uniform P-Matrix Condition for Game Jacobian}\label{Sec-Game-P-Cond}
Here, we consider the uniform P-matrix condition on the Jacobian $\nabla F(\cdot)$ associated with the game in~\eqref{Eq-game}. We show that this condition 
implies that each player's cost function $f_i$ is strongly convex with respect to the player's own decision variable, and
that the Jacobian is non-singular over the set $K$. Then, we relate these findings with our results of Section~\ref{Sec-Main result-VI}.

In the sequel, we use $\nabla^2_{ii}f_i(x_i,x_{-i})$ to denote $\nabla_{x_i^2} f_i(x)$ for all $i$.

\begin{theorem}\label{thm:Pmatrix_strongly_convex}
Let Assumption~\ref{Assum-Suff Cond- part1} hold. Assume that the Jacobian $\nabla F(\cdot)$ satisfies the uniform P-matrix condition on the set $K$ (Definition~\ref{Def-P matrix}).  
Then, we have
\begin{itemize}
    \item [(a)]  There is a uniform positive lower bound for the eigenvalues of the matrices
$\nabla^2_{i i} f(x_i,{x}_{-i})$ for all $x_i\in K_i$, all $x_{-i}\in K_{-i}$, and for all $i$.
In particular,
    for every player $i=1,\ldots,N$, the function $f_i(x_i,{x}_{-i})$ is strongly convex in $x_i\in K_i$ for all $x_{-i}\in K_{-i}$.
    \item [(b)] The Jacobian $\nabla F(x)$ has no zero eigenvalue at any point $x\in K$.
    \item [(c)] Every principal minor of  $\nabla F(x)$ is non-zero for all $x\in K$.
\end{itemize}

\end{theorem}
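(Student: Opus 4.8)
The plan is to dispatch the three parts in the order (c), (b), (a), since the first two fall out immediately from the singular-value bound already in hand, while (a) carries the real content. For part (c) I would simply invoke Theorem~\ref{Thm-Singval-Pmat}: since $\nabla F(\cdot)$ satisfies the uniform P-matrix condition on $K$, every principal sub-matrix of $\nabla F(x)$ has singular values bounded away from zero uniformly on $K$, and hence every principal minor of $\nabla F(x)$ is non-zero for all $x\in K$. Part (b) then follows by taking the full index set $M=\{1,\ldots,\bar n\}$, whose principal minor is $\det(\nabla F(x))$; this determinant is non-zero, so $\nabla F(x)$ is non-singular and can have no zero eigenvalue.

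The core is part (a). Here I would go back to the defining inequality of the uniform P-matrix condition (Definition~\ref{Def-P matrix}) rather than to its singular-value consequence, since the latter only controls $|\lambda|$ and not the sign of the eigenvalues of the symmetric Hessian blocks. Fix a player $i$ and a point $x\in K$, and take the collection $\mathbf{x}_{\bar n}$ to consist of $\bar n$ copies of $x$, so that $A_{\mathbf{x}_{\bar n}}=\nabla F(x)$. I would then choose the test vector $w=(0,\ldots,0,w_i,0,\ldots,0)\in\mathbb{R}^{\bar n}$ supported only on player $i$'s block, with $w_i\in\mathbb{R}^{n_i}$. Using that $H:=H_{w,x}$ is diagonal, so that it restricts to a positive definite diagonal block $H_i$ on player $i$'s coordinates, a direct block computation collapses the quadratic form to
\[\langle w, H\,\nabla F(x)\,w\rangle=\langle w_i, H_i\,\nabla^2_{ii}f_i(x)\,w_i\rangle,\]
whence the P-matrix inequality reads $\langle w_i, H_i\,\nabla^2_{ii}f_i(x)\,w_i\rangle\ge \eta\|w_i\|_2^2$.

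Next I would exploit the symmetry of the Hessian block. Let $\lambda$ be any eigenvalue of $\nabla^2_{ii}f_i(x)$ with unit eigenvector $v$, and take $w_i=v$; substituting $\nabla^2_{ii}f_i(x)v=\lambda v$ yields $\lambda\,\langle v,H_i v\rangle\ge\eta$. Since $H_i\succ0$ we have $\langle v,H_i v\rangle>0$, which forces $\lambda>0$; and since $\langle v,H_i v\rangle\le\lambda_{\max}(H_i)\le C$, where $C=\max_w\max_{z\in K^{\bar n}}\lambda_{\max}(H_{w,z})<\infty$ by the P-matrix condition, I obtain the uniform bound $\lambda\ge\eta/C$. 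As this holds for every eigenvalue, every player $i$, and every $x\in K$, the blocks $\nabla^2_{ii}f_i(x_i,x_{-i})$ are positive definite with eigenvalues bounded below by $\eta/C>0$, and strong convexity of $f_i(\cdot,x_{-i})$ follows from the standard second-order characterization.

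The main obstacle is exactly the sign issue flagged above: the uniform P-matrix condition is a statement about an $H$-weighted quadratic form of the possibly non-symmetric matrix $\nabla F(x)$, and Theorem~\ref{Thm-Singval-Pmat} only extracts $|\lambda|\ge\eta/C$ for the symmetric diagonal blocks, which is not enough for convexity. Keeping the weight $H_i$ \emph{explicit} and testing against eigenvectors is what resolves it, because $H_i$ then contributes the strictly positive scalar $\langle v,H_i v\rangle$ that pins down the sign of $\lambda$. The one point to handle with care is that $H$ depends on the chosen $w$; but the eigenvector $v$ (and hence $w$) is fixed before the condition is invoked, so the uniform bound $C$ on $\lambda_{\max}(H_{w,z})$ applies verbatim.
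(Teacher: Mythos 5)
Your proposal is correct and follows essentially the same route as the paper: parts (b) and (c) are read off from Theorem~\ref{Thm-Singval-Pmat}, and part (a) is proved by embedding an eigenvector of $\nabla^2_{ii}f_i(x)$ into the $i$-th block of a test vector $w$, invoking the uniform P-matrix inequality with $\mathbf{x}_m$ taken as copies of $x$, using the diagonal structure of $H_{w,x}$ to collapse the quadratic form to $\lambda\,\langle v,H_i v\rangle\ge\eta$, and dividing by the uniform bound $C$ on $\lambda_{\max}(H_{w,z})$ to get $\lambda\ge\eta/C>0$. Your explicit remark about why the sign of $\lambda$ is pinned down (rather than just $|\lambda|$) matches the role the eigenvector substitution plays in the paper's own argument.
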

\begin{proof}
For part (a), we let $x\in K$ and $i\in\mathcal{N}$ be arbitrary.
Let $\lambda_{\min}(x)$ be the smallest eigenvalue of \(\nabla^2_{i i}f_i(x)\)
and $v(x)\ne 0$ be its associated eigenvector. 
Define $w(x)\in\mathbb{R}^{\bar n}$
by 
\begin{equation}\label{eq-defw}
w(x)=\bigl(0,\dots,0,\underbrace{v(x)}_{i\text{‑th block}},0,\dots,0\bigr)^\top.
\end{equation}
Since the Jacobian $\nabla F(x)$ satisfies the uniform P-matrix condition on $K$, there exists $\eta>0$ and, for the given $w$ and ${\bf x}_m$ consisting of $m$ copies of the vector $x\in K$, there exists positive definite block‑diagonal matrix
$H_{w(x),x}$
 such that 
\begin{equation}\label{eq:P_quad}
\langle w(x),H_{w(x),x}\,\nabla F(x) w(x)\rangle \ge\eta\| w(x)\|_2^{2}.
\end{equation}
By the definition of the vector $w(x)$ in~\eqref{eq-defw}, it follows that
\begin{align*}
\langle w(x),H_{w(x),x}\,\nabla F(x) w(x)\rangle 
& =\langle v(x),D_i(x)\nabla^2_{i i}f_i(x) v(x)\rangle\cr
&=\lambda_{\min}(x)\langle v(x),D_i(x)v(x)\rangle,
\end{align*}
where $D_i(x)$ denotes the $i$-th block diagonal matrix  of 
$H_{w(x),x}$, and the last equality follows from the fact that $v(x)$ is an eigenvector of $\nabla^2_{i i}f_i(x)$ associated with the eigenvalue $\lambda_{\min}(x)$.
By combining the preceding relation with~\eqref{eq:P_quad}
and noting that $\|w(x)\|_2=\|v(x)\|_2$, we obtain
\[\lambda_{\min}(x)\langle v(x),D_i(x)v(x)\rangle\ge \eta\| v(x)\|_2^{2}.\]
Therefore,
\[
\lambda_{\min}\cdot\lambda_{\max}(D_i) \cdot \| v\|_2^{2}\ge\eta\| v\|_2^{2},\]
where $v(x)\ne0$.
Note that $\lambda_{\max}(D_i(x))\le \lambda_{\max}(H_{w,x})$ and 
by the uniform P-matrix condition, we have that
{\color{black} $\sup_{w}\sup_{z\in K^m}\lambda_{\max}(H_{w,z})<\infty$. Hence,
\[\lambda_{\min}(x)\ge\frac{\eta}{C}>0\qquad\hbox{with }\quad C=\sup_{w}\sup_{z\in K^m}\lambda_{\max}(H_{w,z}).
\]}
Thus, every eigenvalue of \(\nabla^2_{i i}f_i(x)\) is bounded below by the positive
constant \(\eta/C\) implying that 
$f_i(x_i,x_{-i})$ is \(\alpha\)-strongly convex in \(x_i\) over $K_i$ for all $x_{-i}\in K_{-i}$ with the strong convexity constant
\(\alpha=\eta/C\).

The statements in parts (b) and (c) follow from Theorem~\ref {Thm-Singval-Pmat}, since the Jacobian $\nabla F(\cdot)$ satisfies the uniform P-matrix condition over the set $K$. 
\hfill $\square$
\end{proof}

The following result is a consequence of Lemma~\ref{Lem-coercivity of F-K-Nor} and the uniform P-matrix condition on the game Jacobian.
\begin{theorem}\label{thm:Pmatrix_norm_coercive}
      Let Assumption~\ref{Assum-Suff Cond- part1} hold, and  let $\nabla F(\cdot)$ satisfy the uniform P-matrix condition over the set $K$. Also, assume that the mapping $F(\cdot)$  satisfies the following relation
    for some $p\ge1$, $L_0\ge0$, and $L_p>0$, 
    \[\|F(x)-F(y)\|_2\le L_0+L_p\|x-y\|_2^p\qquad\hbox{for all }x,y\in K.\]
    Moreover, assume that each player's set $K_i\subseteq \mathbb{R}^{n_i}$ is a Cartesian product of $n_i$ one-dimensional closed convex sets, i.e., $K_i=K_{i1}\times\cdots \times K_{in_i}$ with each $K_{ij}$ being closed and convex for all $i$ and $j$.
      Then, the normal mapping $F_K^{\rm nor}(\cdot)$ associated with the game~\eqref{Eq-game} is norm coercive.
\end{theorem}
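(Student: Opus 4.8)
The plan is to reduce the claim to a direct application of Lemma~\ref{Lem-coercivity of F-K-Nor}. That lemma requires two hypotheses on a Cartesian-structured set: (i) the $L_0$--$L_p$ growth condition on $F(\cdot)$, and (ii) the block uniform P-function property~\eqref{eq-unip-fp} relative to some product decomposition of $K$. Hypothesis (i) is assumed outright in the present theorem, so the entire task is to verify (ii).

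First I would invoke the passage following Definition~\ref{Def-P-function}: since $\nabla F(\cdot)$ satisfies the uniform P-matrix condition on $K$, Proposition~3 c) of~\cite{parise2019variational} guarantees that $F(\cdot)$ is a uniform P-function on $K$ in the sense of Definition~\ref{Def-P-function}. That is, there is a $\mu>0$ with
\[\max_{1\le k\le \bar n}[F(x)-F(y)]_k\cdot[x-y]_k\ge\mu\|x-y\|_2^2\qquad\hbox{for all }x,y\in K,\]
where $[\cdot]_k$ denotes the $k$-th scalar coordinate.

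The crucial step is then to reinterpret this scalar P-function inequality as the block condition~\eqref{eq-unip-fp} for an appropriate product structure. Here I would use the extra hypothesis that each player set factors as $K_i=K_{i1}\times\cdots\times K_{in_i}$ with every $K_{ij}$ one-dimensional closed convex. Consequently the whole set $K$ is the Cartesian product of $\bar n$ one-dimensional closed convex sets. Taking this finest decomposition---so that each block is a single coordinate---the block inner products $\langle[F(x)-F(y)]_j,[x-y]_j\rangle$ in~\eqref{eq-unip-fp} collapse into the scalar products $[F(x)-F(y)]_k\cdot[x-y]_k$, and~\eqref{eq-unip-fp} becomes precisely the scalar uniform P-function inequality established above. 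With condition~\eqref{eq-unip-fp} thus verified for the one-dimensional block decomposition, and the $L_0$--$L_p$ condition available by assumption, Lemma~\ref{Lem-coercivity of F-K-Nor} applies and yields the norm coercivity of $F_K^{\rm nor}(\cdot)$.

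The main conceptual obstacle is the mismatch between the block P-function demanded by Lemma~\ref{Lem-coercivity of F-K-Nor} and the scalar P-function delivered by the P-matrix condition: a scalar P-function does \emph{not} in general imply a genuine multi-dimensional block P-function, because negative cross-terms within a block can violate the inequality. The hypothesis that each $K_i$ splits into one-dimensional factors is exactly what removes this difficulty, by allowing the Cartesian structure in Lemma~\ref{Lem-coercivity of F-K-Nor} to be taken at the scalar level, where the two notions coincide.
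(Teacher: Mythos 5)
Your proposal is correct and follows essentially the same route as the paper's own proof: invoke Proposition~3~c) of~\cite{parise2019variational} to get the scalar uniform P-function property from the uniform P-matrix condition, then use the hypothesis that each $K_i$ factors into one-dimensional closed convex sets so that $K$ decomposes into $\bar n$ scalar factors, making the scalar P-function inequality coincide with the block condition~\eqref{eq-unip-fp}, and finally apply Lemma~\ref{Lem-coercivity of F-K-Nor}. Your closing remark on why the one-dimensional factorization is needed (a scalar P-function need not imply a multi-dimensional block P-function) is a correct and useful elaboration of a point the paper leaves implicit.
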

\begin{proof}
    The uniform P-matrix condition implies that the mapping $F(\cdot)$ is a uniform P-function on the set $K$ by Proposition~3.c) of \cite{parise2019variational}. Hence, by the definition of the uniform P-function
    on $K$ (see Definition~\ref{Def-P-function}), there exists $\mu>0$ such that 
\[\max_{1\le j\le \bar n}[F(x)-F(y)]_j\cdot [x-y]_j\geq \mu\|x-y\|_2^2\qquad \hbox{for all }x,y\in K,\]
where $\bar n=\sum_{i=1}^N n_i$ and $[\cdot]_j$ denotes the $j$th element of the vector. By our assumption on the Cartesian structure of the players' sets $K_i$, we have that $K=X_1\times\cdots\times X_{\bar n}$, where each set $X_i\subseteq\mathbb{R}$ is closed and convex.
Hence, the mapping $F(\cdot)$ satisfies relation~\eqref{eq-unip-fp}, with $m=\bar n$, in terms of the coordinates $j=1,\ldots,\bar n$, and the result follows by
Lemma~\ref{Lem-coercivity of F-K-Nor}. 
\hfill $\square$
\end{proof}

When the Jacobian $\nabla F(\cdot)$ of the game mapping satisfies the uniform P-matrix condition (Definition~\ref{Def-P matrix}) over the set $K$, the Jacobian $\nabla F(\cdot)$  has no zero eigenvalue at any point in the set $K$ by Theorem~\ref{thm:Pmatrix_strongly_convex}(b). Moreover, it has all principal minors positive by Theorem~\ref{thm:Pmatrix_strongly_convex}(c).
In addition, when each player's set $K_i$ is a Cartesian product of $n_i$ one-dimensional closed convex sets, then the set $K=K_1\times\cdots\times K_{\bar n}$, $\bar n=\sum_{i=1}^N n_i$, then 
the conditions of Theorem~\ref{thm-general jac of Const} are satisfied.

If the Jacobian $\nabla F(\cdot)$ satisfies the uniform P-matrix condition, then the normal mapping $F_K^{\rm nor}(x)$ is norm coercive by Theorem~\ref{thm:Pmatrix_norm_coercive}. Hence, the mapping $F(\cdot)$ satisfies the conditions of Theorem~\ref{thm-Normal-Mapping_Sol_Existence}, as long as $K=K_1\times\cdots\times K_{Nn}$ (which holds when each player's set $K_i$ is a Cartesian product of $n_i$ one-dimensional sets). Consequently, the existence of a quasi-Nash equilibrium is guaranteed by Theorem~\ref{thm-Normal-Mapping_Sol_Existence}.
Furthermore, by
Theorem~\ref{thm:Pmatrix_strongly_convex}(a), each  player's cost function is strongly convex with respect to the player's own decision variables. Thus, for the game in~\eqref{Eq-game},
the set of quasi-Nash equilibria coincides with the set of Nash equilibria.
To visualize the implications of the uniform P-matrix condition and its connection to our results, we provide a block-diagram  in Figure~\ref{fig:relation}.

\begin{figure}[h!]
\centering
  \includegraphics[width=0.85\linewidth]{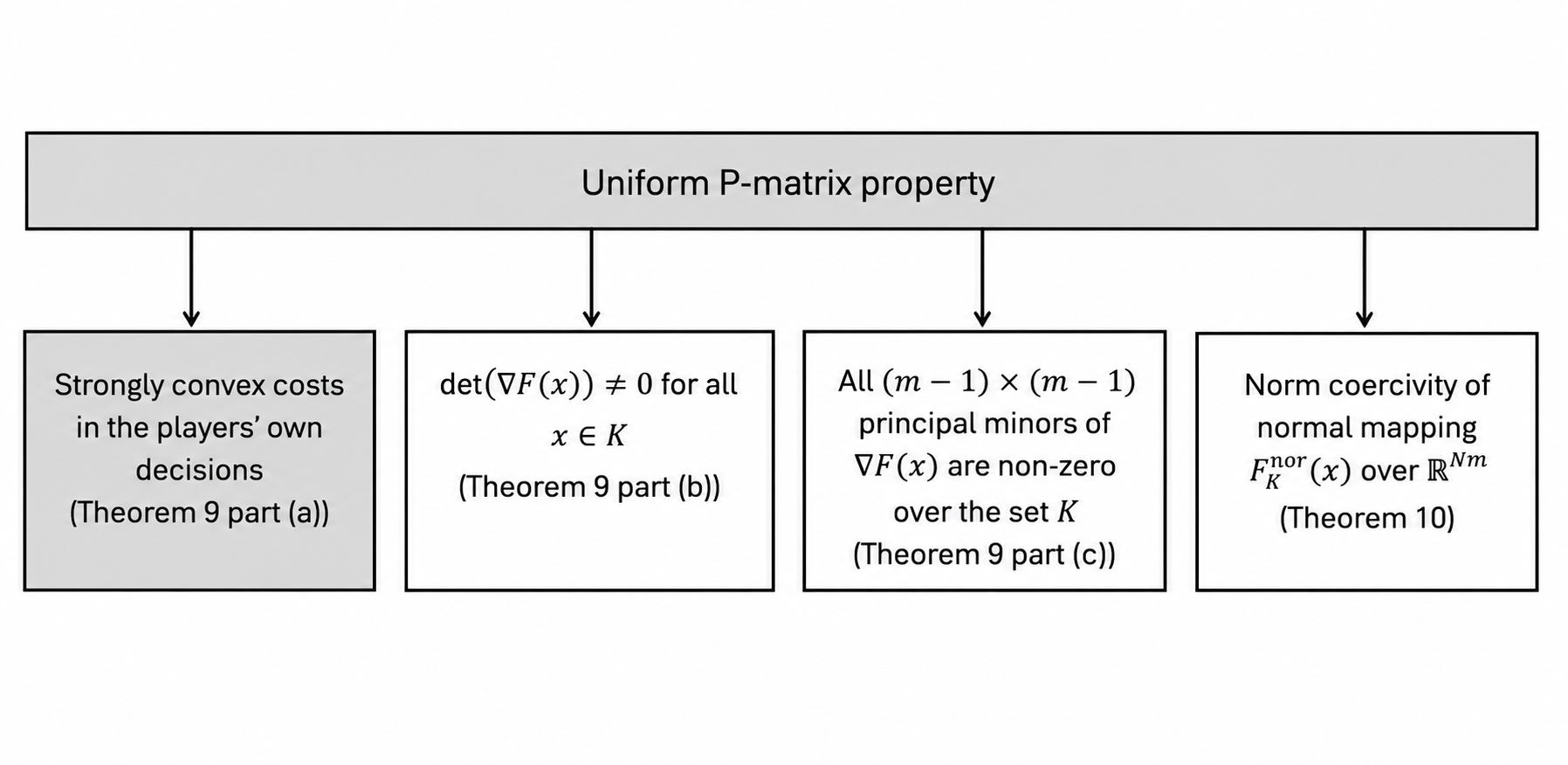}
  \caption{Relation of the uniform P-matrix property to our results.}
  \label{fig:relation}
\end{figure}

In a case of VI$(K,F)$ arising from a game, 
aside from the uniform P-matrix condition, the P$_\Upsilon$-matrix condition~\cite{parise2019variational} can be used to assert the existence of a solution to VI($K,F$), which we 
consider in the next section.

\subsection{P$_{\Upsilon}$-Matrix Condition for Game Jacobian}\label{Sec-Game-P_U-Cond}
In this section, we consider the game in~\eqref{Eq-game} where each player action set is of the same dimension, i.e., $K_i\subseteq\mathbb{R}^n$ for all $i=1,\ldots,N$. In this case, the game Jacobian $\nabla F(x)$ is an $Nn\times Nn$ matrix and can be decomposed in $N^2$ blocks, each being an $n\times n$ matrix. With such a Jacobian matrix one can associate an $N\times N$ matrix that captures certain properties of the blocks.

To formalize the framework, consider a mapping $x\mapsto A(x)$ defined on the set $K$, where $A(x)$ is an $Nn\times Nn$ matrix with $N^2$ blocks. Let $A_{ij}(x)$ denote the $ij$-th block of $A(x)$. The $\Upsilon$ matrix associated with this mapping  is independent of $x$, and it is defined by
\begin{equation}\label{Upsilon-matrix}
\Upsilon=\begin{bmatrix}
\kappa_{11} &-\kappa_{12} & \ldots & -\kappa_{1N}\\
-\kappa_{21} &\kappa_{22} & \ldots & -\kappa_{2N}\\
 \vdots &\vdots & \ddots & \vdots \\
 -\kappa_{N1} &-\kappa_{N2} & \ldots & \kappa_{NN}\\
\end{bmatrix},
\end{equation}
where 
\[\kappa_{ii}=\inf_{x\in K} \lambda_{\min}(A_{ii}(x))\qquad\hbox{for all $i$},\] 
\[\kappa_{ij}=\sup_{x\in K}\|A_{ij}(x)\|_2\qquad\hbox{for all $i\neq j$}.\]

We have the following definition.
\begin{definition}[${\rm P}_\Upsilon$-Matrix Condition, Definition~5 \cite{parise2019variational}]\label{P-Upsilon}
    Given a set $K$, a matrix-valued mapping $A(\cdot):K\to\mathbb{R}^{Nn\times Nn}$ satisfies ${\rm P}_\Upsilon$-matrix condition over the set $K$ if 
    the diagonal blocks of $A(x)$ are symmetric and positive definite for all $x\in K$, $\kappa_{ij}<\infty$ for all $i,j$, and the matrix $\Upsilon$ in \eqref{Upsilon-matrix} is a P-matrix\footnote{A P-matrix is a matrix that has all principal minors positive, see Definition 3.4 in~\cite{fiedler-ptak-Pmatrices}.}.
\end{definition}

When the game Jacobian $\nabla F(\cdot)$ satisfies the ${\rm P}_\Upsilon$-matrix condition, then the game in~\eqref{Eq-game} has a unique Nash equilibrium.

\begin{theorem}\label{thm-P_Upsilon-uniquesol}
Let Assumption~\ref{Assum-Suff Cond- part1} hold and let $K_i\subseteq\mathbb{R}^n$ for all $i$. Let the Jacobian $\nabla F(\cdot)$ of the game in~\eqref{Eq-game} satisfy the P$_{\Upsilon}$-matrix condition on the set $K$. Then, the game has a unique Nash equilibrium.
\end{theorem}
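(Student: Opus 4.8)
The plan is to prove the theorem in two parts, mirroring the structure of the paper: first establish existence of a solution to VI$(K,F)$ via the machinery of Section~\ref{Sec-Main result-VI}, and then upgrade that solution to a genuine Nash equilibrium by invoking the strong convexity of each player's cost function. Uniqueness will follow from the $\mathrm{P}_\Upsilon$-matrix structure. The key observation is that the $\mathrm{P}_\Upsilon$-matrix condition is a ``block'' analogue of the uniform P-matrix condition, so the two theorems proved for the uniform P-matrix case (Theorem~\ref{thm:Pmatrix_strongly_convex} and Theorem~\ref{thm:Pmatrix_norm_coercive}) should have direct analogues here.

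\medskip

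First I would show, as a part (a)-type claim, that the $\mathrm{P}_\Upsilon$-matrix condition forces each diagonal block $\nabla^2_{ii}f_i(x)$ to be symmetric positive definite with eigenvalues bounded below uniformly over $K$; indeed this is essentially built into Definition~\ref{P-Upsilon}, since the diagonal blocks are assumed symmetric positive definite and $\kappa_{ii}=\inf_{x\in K}\lambda_{\min}(\nabla^2_{ii}f_i(x))$ must be positive for $\Upsilon$ to be a P-matrix (all diagonal entries of a P-matrix are positive). This immediately gives that each $f_i(x_i,x_{-i})$ is $\kappa_{ii}$-strongly convex in $x_i$ over $K_i$ for every fixed $x_{-i}\in K_{-i}$. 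Next I would establish that the Jacobian $\nabla F(x)$ is nonsingular on $K$: the standard argument is that the $\mathrm{P}_\Upsilon$-condition implies $\nabla F(x)$ is a (block) P-matrix, hence satisfies the uniform P-matrix condition of Definition~\ref{Def-P matrix} (this is precisely the kind of implication established in~\cite{parise2019variational}), and then Theorem~\ref{Thm-Singval-Pmat} yields that all principal minors of $\nabla F(x)$ are nonzero and bounded away from zero on $K$, including the smallest singular value on the boundary and all $(m-1)\times(m-1)$ principal minors.

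\medskip

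With these structural facts in hand, the existence argument proceeds exactly as in the uniform P-matrix development following Theorem~\ref{thm:Pmatrix_norm_coercive}. The $\mathrm{P}_\Upsilon$-condition gives norm coercivity of $F(\cdot)$ and hence of the normal mapping $F^{\rm nor}_K(\cdot)$ (via a block uniform P-function property feeding into Lemma~\ref{Lem-coercivity of F-K-Nor}, assuming a suitable growth bound on $F$). The nonsingularity and minor conditions verify the hypotheses of Theorem~\ref{thm-general jac of Const}, so $\partial(tF)^{\rm nor}_K(x)$ is of maximal rank for some $t>0$. Theorem~\ref{thm-Normal-Mapping_Sol_Existence} then delivers a solution $x^*$ to VI$(K,tF)$, which is also a solution to VI$(K,F)$ since the solution sets coincide. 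Because each $f_i$ is strongly convex in its own variable, the first-order (quasi-Nash) stationarity condition $\langle x-x^*,F(x^*)\rangle\ge 0$ is equivalent to each player minimizing $f_i(\cdot,x^*_{-i})$ over $K_i$, so $x^*$ is a genuine Nash equilibrium.

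\medskip

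For uniqueness I would argue that the $\mathrm{P}_\Upsilon$-matrix condition makes $F(\cdot)$ strictly monotone, or more directly a uniform P-function, on $K$, so that VI$(K,F)$ admits at most one solution by Proposition~3.5.10 of~\cite{facchinei2003finite}. \textbf{The main obstacle} I anticipate is the clean derivation of the implication ``$\mathrm{P}_\Upsilon$-matrix condition $\Rightarrow$ uniform P-matrix condition on $\nabla F(\cdot)$'': one must construct, for each $w$ and each collection of points, the positive definite diagonal scaling matrix $H_{w,\mathbf{x}_m}$ with a uniform spectral bound, using the fact that the comparison matrix $\Upsilon$ is a P-matrix. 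This typically invokes a characterization of P-matrices (e.g., the existence of a positive diagonal scaling making $\Upsilon$ positive definite, a standard fact about P-matrices) and then lifts that block-level scaling to the full $Nn\times Nn$ setting. Establishing the uniform bound $\max_w\max_z\lambda_{\max}(H_{w,z})<\infty$ — rather than merely positivity — is the delicate point, since it is exactly what converts pointwise nonsingularity into the uniform singular-value lower bounds needed to apply Theorem~\ref{thm-general jac of Const} and Theorem~\ref{thm-Normal-Mapping_Sol_Existence}.
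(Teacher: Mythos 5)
Your Nash-upgrade and uniqueness steps essentially coincide with the paper's: the diagonal entries $\kappa_{ii}$ of the P-matrix $\Upsilon$ are positive, so each $f_i(\cdot,x_{-i})$ is strongly convex in $x_i$ over $K_i$, and the uniform \emph{block} P-function property of $F$ (Proposition~3b) of \cite{parise2019variational}) yields at most one VI solution via Proposition~3.5.10(b) of \cite{facchinei2003finite}; combined with Proposition~1.4.2 of \cite{facchinei2003finite}, any VI solution is then the unique Nash equilibrium. The genuine problem is your existence argument. The paper does \emph{not} route existence through Section~\ref{Sec-Main result-VI} at all: the same uniform block P-function property already gives existence \emph{and} uniqueness of the solution of VI$(K,F)$ directly (Proposition~2b) of \cite{parise2019variational}, equivalently Proposition~3.5.10(b) of \cite{facchinei2003finite}), so no coercivity or generalized-Jacobian analysis is needed.

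Your route, by contrast, needs three hypotheses that are not available under the theorem's assumptions. First, the implication ``P$_\Upsilon$-matrix condition $\Rightarrow$ uniform P-matrix condition on $\nabla F(\cdot)$'' is established neither in this paper nor in \cite{parise2019variational}; there, the block-level P$_\Upsilon$ condition and the coordinate-level uniform P-matrix condition (with its diagonal scalings $H_{w,{\bf x}_m}$) are treated as \emph{separate} sufficient conditions, and you flag this implication yourself as the main obstacle without resolving it, so Theorem~\ref{Thm-Singval-Pmat} cannot be invoked. Second, Lemma~\ref{Lem-coercivity of F-K-Nor} requires the $L_0$--$L_p$ growth bound on $F$, which Theorem~\ref{thm-P_Upsilon-uniquesol} does not assume; under Assumption~\ref{Assum-Suff Cond- part1} alone, $F$ need satisfy no such bound on an unbounded $K$. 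Third, Theorem~\ref{thm-general jac of Const} requires $\partial\Pi_K[x]\subseteq{\rm Conv}(G)$ for $x\notin K^\circ$, which the paper justifies only when $K$ is a Cartesian product of \emph{one-dimensional} closed convex sets; here each $K_i\subseteq\mathbb{R}^n$ is an arbitrary closed convex set, so this hypothesis is not verified. Any one of these gaps blocks the existence part of your proof; the fix is to discard the Section~\ref{Sec-Main result-VI} machinery entirely and cite Propositions~2b) and 3b) of \cite{parise2019variational} as the paper does.
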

\begin{proof}
By Proposition 3b) of~\cite{parise2019variational}, if the game Jacobian satisfies the P$_{\Upsilon}$-matrix condition, then the mapping $F(\cdot)$ is a uniform block P-function, i.e., there is $\mu>0$ such that 
 \[\max_{1\le j\le N}\langle[F(x)-F(y)]_j, [x-y]_j\rangle\geq \mu\|x-y\|_2^2
 \qquad\hbox{for all $x,y\in K_1\times\cdots\times K_N$},\]
where $[\cdot]_j$ denotes the $j$-th block of a vector.
By Proposition 2b) of~\cite{parise2019variational} (or Proposition 3.5.10(b) of~\cite{facchinei2003finite}), it follows that the VI$(K,F)$ associated with the game has a unique solution. It remains to show that this solution is the Nash equilibrium of the game.

Since 
the Jacobian $\nabla F(\cdot)$ satisfies the P$_{\Upsilon}$-matrix condition on the set $K$, it follows that all diagonal blocks of the Jacobian are positive definite for all $x\in K$. Moreover, since $\Upsilon$ is a P-matrix, it follows that its diagonal entries are positive, i.e., $\kappa_{ii}=\inf_{x\in K} \lambda_{\min}(A_{ii}(x)) >0$ for all $i$. 
The $i$-th diagonal block of $\nabla F(x)$ is given by $\nabla_{ii}f_i(x)$, so we conclude that, for every $i$, the function $f_i(x)$ is (strongly) convex in $x_i$ over the set $K_i$ for all $x_{-i}\in K_{-i}$, for all $i$. Hence, the solution to the VI$(K,F)$  is the unique Nash equilibrium of the game by Proposition~1.4.2 in~\cite{facchinei2003finite}.
\hfill $\square$
\end{proof}

Theorem~\ref{thm-P_Upsilon-uniquesol} establishes the existence and uniqueness of the Nash equilibrium in game~\eqref{Eq-game} under weaker conditions than that of Theorem~2 in~\cite{parise2019variational}. In particular, in addition to assuming that the players' cost functions $f_i(x_i, x_{-i})$ are twice continuously differentiable in $x$ over the set $K$ and that the game Jacobian satisfies the P$_{\Upsilon}$-matrix condition,
Theorem~2 in~\cite{parise2019variational} requires that the gradients $\nabla_i f_i(x)$ are Lipschitz continuous in $x$ over the set $K$ for all $i$ and that each $f_i(x_i, x_{-i})$ is convex in $x_i$ over the set $K_i$ for all $x_{-i}\in K_{-i}$. In contrast, Theorem~\ref{thm-P_Upsilon-uniquesol} does not require any condition on the gradients $\nabla_i f_i(x)$. Moreover, as seen from the proof of Theorem~\ref{thm-P_Upsilon-uniquesol}, the P$_{\Upsilon}$-matrix condition on the Jacobian $\nabla F(\cdot)$ over the set $K$ implies that each player's cost function  $f_i(x_i, x_{-i})$ is strongly convex in $x_i $  over the set $K_i$ for all $x_{-i}\in K_{-i}$.

Next, we consider a special case of the game in~\eqref{Eq-game} where each player's actions set is the entire space $\mathbb{R}^n$. 
We show that if the VI($\mathbb{R}^{Nn}$) has a quasi-Nash equilibrium 
$\bar x$ and a Polyak–\L{}ojasiewicz (P\L{})-type condition holds for all cost functions at $\bar x$, then $\bar x$ is a Nash equilibrium of the game.

\begin{theorem}\label{Thm-Our+PL}
    Consider the game introduced in \eqref{Eq-game} with $K_i=\mathbb{R}^{n}$ for all $i$, and assume that each 
    $f_i:\mathbb{R}^{Nn}\to \mathbb{R}$ is twice continuously differentiable. Also, assume that  
    the game has a quasi-Nash equilibrium, denoted by $\bar{x}$, 
    satisfying  the following P\L-type condition for all $i$: for $\bar x_{-i}\in \mathbb{R}^{(N-1)n}$ there is $\mu_i(\bar{x}_{-i})>0$ such that for all $x_i\in\mathbb{R}^n$,
\begin{equation}\label{eq-plcond}
        \|\nabla_i f_i(x_i,\bar{x}_{-i})\|_2^2\geq \mu_i(\bar{x}_{-i})\left(f_i(x_i,\bar{x}_{-i})-\min_{y_i\in \mathbb{R}^n}f_i(y_i,\bar{x}_{-i})\right).
        \end{equation}
    Then, $\bar{x}$ is a Nash equilibrium of the game.
\end{theorem}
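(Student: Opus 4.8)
The plan is to use the unconstrained structure ($K_i=\mathbb{R}^n$) to first reduce the quasi-Nash stationarity condition to the exact equality $F(\bar{x})=0$, and then to invoke the P\L-type inequality \eqref{eq-plcond} to upgrade stationarity into global optimality of each player's own subproblem.

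First I would recall that, by Definition~\ref{def-quasi-Nash}, the quasi-Nash equilibrium $\bar{x}$ satisfies $\langle x-\bar{x},F(\bar{x})\rangle\geq 0$ for all $x\in K=\mathbb{R}^{Nn}$. Since $K$ is the whole space, $x-\bar{x}$ ranges over all of $\mathbb{R}^{Nn}$; taking $x-\bar{x}=-F(\bar{x})$ would give $-\|F(\bar{x})\|_2^2\geq 0$, which forces $F(\bar{x})=0$. Because $F(x)=[\nabla_1 f_1(x),\ldots,\nabla_N f_N(x)]$, this is equivalent to $\nabla_i f_i(\bar{x}_i,\bar{x}_{-i})=0$ for every player $i$.

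Next I would substitute $x_i=\bar{x}_i$ into the P\L-type inequality \eqref{eq-plcond}. Its left-hand side equals $\|\nabla_i f_i(\bar{x}_i,\bar{x}_{-i})\|_2^2=0$ by the previous step, so the inequality yields $0\geq\mu_i(\bar{x}_{-i})\bigl(f_i(\bar{x}_i,\bar{x}_{-i})-\min_{y_i\in\mathbb{R}^n}f_i(y_i,\bar{x}_{-i})\bigr)$. Since $\mu_i(\bar{x}_{-i})>0$, this forces $f_i(\bar{x}_i,\bar{x}_{-i})\leq\min_{y_i\in\mathbb{R}^n}f_i(y_i,\bar{x}_{-i})$; combined with the trivial reverse inequality (the value at $\bar{x}_i$ cannot fall below the infimum over all $y_i$), I obtain $f_i(\bar{x}_i,\bar{x}_{-i})=\min_{y_i\in\mathbb{R}^n}f_i(y_i,\bar{x}_{-i})$, i.e.\ $\bar{x}_i$ globally minimizes $f_i(\cdot,\bar{x}_{-i})$ over $\mathbb{R}^n$. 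As this holds for every $i$, the Nash inequality of Definition~\ref{def-Nash} follows, so $\bar{x}$ is a Nash equilibrium.

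I do not expect a genuine obstacle here; the argument is essentially immediate. The one step that warrants care is the passage from stationarity to $F(\bar{x})=0$, which relies crucially on $K_i=\mathbb{R}^n$: over a proper subset the variational inequality would only place $-F(\bar{x})$ in the normal cone rather than force it to vanish, and the conclusion could fail. A secondary subtlety is whether the infimum $\min_{y_i}f_i(y_i,\bar{x}_{-i})$ is attained, but the P\L\ argument sidesteps this by showing the infimal value is achieved at $\bar{x}_i$, so no separate existence-of-minimizer hypothesis is needed.
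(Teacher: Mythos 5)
Your proposal is correct and follows essentially the same route as the paper's own proof: use $K_i=\mathbb{R}^n$ to turn the quasi-Nash variational inequality into $\nabla_i f_i(\bar{x}_i,\bar{x}_{-i})=0$ for every $i$, then evaluate the P\L-type inequality \eqref{eq-plcond} at $x_i=\bar{x}_i$ to conclude that $\bar{x}_i$ attains $\min_{y_i\in\mathbb{R}^n} f_i(y_i,\bar{x}_{-i})$, which is the Nash property. Your added remarks on the role of the unconstrained sets and on attainment of the infimum are accurate but not needed beyond what the paper does.
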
 
\begin{proof}
By the definition of the quasi-Nash equilibrium, for $\bar x$ we have that 
\[\langle \nabla_i f(\bar x_i,\bar x_{-i}),x_i-\bar x_i\rangle\ge0 \qquad\hbox{for all $x_i\in K_i$ and for all $i$}.\]
Since $K_i=\mathbb{R}^n$ for all $i$, 
    it further follows that $\nabla_{i}f_i(\bar x_i,\bar{x}_{-i})=0$ for all $i$.
    Then, by the assumed condition in~\eqref{eq-plcond}, it follows that
    \[f_i(\bar x_i,\bar{x}_{-i})=\min_{y_i\in \mathbb{R}^n}f_i(y_i,\bar{x}_{-i})\qquad\hbox{for all $i$}.\]
    Therefore, 
    \[f_i(x_i,\bar x_{-i})\ge f_i(\bar x_i,\bar{x}_{-i})\qquad\hbox{for all $x_i\in\mathbb{R}^n$ and for all $i$},\]
    implying that $\bar x$ is a Nash equilibrium.
    \hfill$\square$
\end{proof}

The condition in~\eqref{eq-plcond} is motivated by the (P\L{}) condition, originally proposed and studied in \cite{POLYAK1963864} for nonconvex optimization problems. 

In the following example, 
we provide a game where one can apply Theorem~\ref{thm-Normal-Mapping_Sol_Existence0} to assert the existence of a quasi-Nash equilibrium, while the existence results relying on the uniform P-matrix or P$_\Upsilon$-matrix condition cannot be applied (such as those of Proposition~2b), Proposition~3b), or Theorem~2 in~\cite{parise2019variational}). The game is constructed so that the game mapping is the same as the mapping $F(\cdot)$ considered in Example~\ref{example-vi}.

{\color{black}
\begin{example}\label{example-game}
    Consider a two player game with the following cost functions 
    \[f_1(x_1,x_2)=\frac{x_1^2}{2}+2x_1x_2,\quad f_2(x_1,x_2)=3x_1x_2+\frac{x_2^2}{2}\qquad\hbox{for all $x_1,x_2\in [0,\infty)$}.\]
   The game mapping is $F(x)=[x_1+2x_2,3x_1+x_2]^\top$, and the unique quasi-Nash equilibrium is $x^*=[0,0]^\top$.
    The Jacobian of the game mapping is \begin{equation*}\label{Example-Jacobian-game}
    \nabla F(x)=A,\qquad A=\begin{bmatrix}
1 &2 \\
3 &1 \\
\end{bmatrix} \qquad\hbox{for all } x.
\end{equation*}
As seen in Example~\ref{example-vi},
the VI associated with the game satisfies the conditions of Theorem~\ref{thm-Normal-Mapping_Sol_Existence0} and, thus, the game has a quasi-Nash equilibrium.
Note that the cost function of each player is strongly convex in the player's own decision variable,
so a quasi-Nash equilibrium is a Nash equilibrium. Hence, the game has a unique Nash equilibrium at $[0,0]^\top$.
\\
\noindent
{\it Uniform P-matrix condition fails.}
By Proposition~3c) in~\cite{parise2019variational}, if $\nabla F(\cdot)$ satisfies the uniform P-matrix condition, then $F(\cdot)$ is a uniform P-function. In Example~\ref{example-vi}, we have shown that the mapping $F(x)=Ax$ is not a uniform P-function, hence its Jacobian can not satisfy the uniform P-matrix condition.\\
{\it P$_\Upsilon$-matrix condition fails.}
In this case, with $2$-players and one-dimensional players' action sets, the matrix $A$ decomposes into four 1-dimensional blocks, so $\Upsilon=A$.
The $\rm P_\Upsilon$-matrix condition requires that $A$ is a P-matrix, i.e., all principal minors are positive, which is not the case
since $\det(A)=-5$. \hfill $\square$
\end{example}}

\section{Conclusions}\label{Sec-Conclusion}
{\color{black} This paper has investigated sufficient conditions for the existence of solutions to a non-monotone VI$(K,F)$ by using the normal mapping $F_K^{\rm nor}(\cdot)$ associated with the VI$(K,F)$. In particular, the main results show that the solution set of the VI$(K,F)$ is nonempty and compact when the normal mapping $F_K^{\rm nor}(\cdot)$ is norm coercive and its generalized Jacobian has some 
non-singularity properties outside the set of zeros of the normal mapping $F_K^{\rm nor}(\cdot)$. Further, some conditions on the VI mapping and its generalized Jacobian are investigated ensuring that the assumptions of our main existence results are met. The implications of the main results for VIs arising from games have also been provided. 
}

\bibliographystyle{plain}
\bibliography{refs.bib}
\end{document}